\documentclass[12pt]{amsart}

\usepackage{amssymb}
\usepackage{hyperref}
\usepackage{graphicx}
\usepackage{enumerate}
\usepackage{vmargin}
%\usepackage[all,  colour, line, dvips]{xy}
% \usepackage{preamble}
%\setpapersize{USletter}
%\setmargrb{1in}{1in}{1in}{1in} % --- top, left, right, bottom margins
% --- (leaves additional room for header/footer)

\newtheorem*{thm*}{Theorem}%[section]
\newtheorem{thm}[equation]{Theorem}%[section]
\newtheorem{cor}[equation]{Corollary}
\newtheorem{prop}[equation]{Proposition}
\newtheorem{lem}[equation]{Lemma}

\theoremstyle{remark}

\newtheorem*{remark*}{Remark}

\theoremstyle{definition}
\newtheorem{example}[equation]{Example}

\newcommand{\bd}{\mathbf{d}}

\newcommand{\bm}{\mathbf{m}}

\newcommand{\kk}{\Bbbk}
\newcommand{\bbK}{\mathbb{K}}

\newcommand{\dvec}{\bd}
\newcommand{\mvec}{\bm}

\newcommand{\Z}{\mathbb{Z}}

\newcommand{\C}{\mathbb{C}}

\newcommand{\PP}{\mathbb{P}}

\newcommand{\defining}[1]{\textbf{#1}}

\title{Waring decompositions of monomials}

 \author[W.~Buczy\'nska]{Weronika Buczy\'nska}
 \email{wkrych@mimuw.edu.pl}
 \address{Institute of Mathematics of the
 Polish Academy of Sciences\\
 ul. \'Sniadeckich 8\\
 P.O. Box 21\\
 00-956 Warszawa, Poland}

 \author[J.~Buczy\'nski]{Jaros\l{}aw Buczy\'nski}
 \email{jabu@mimuw.edu.pl}
 \address{Institut Fourier \\ Universit\'e Grenoble I \\
 100 rue des Maths, BP 74\\
 38402 St Martin d'H\`eres cedex, France}
 \address{Institute of Mathematics of the
 Polish Academy of Sciences\\
 ul. \'Sniadeckich 8\\
 P.O. Box 21\\
 00-956 Warszawa, Poland}
 \author[Z.~Teitler]{Zach Teitler}
 \email{zteitler@boisestate.edu}
 \address{Department of Mathematics \\ 1910 University Drive \\ Boise State University \\ Boise, ID 83725-1555 \\ USA}

\date{\today}

\subjclass[2010]{15A21, 14N15}

\dedicatory{To Tony Geramita on the occasion of his 70th birthday.}

%%%%%% some macros extracted from Jarek's preamble sty

\newcommand{\ccI}{{\mathcal{I}}}
\newcommand{\ccJ}{{\mathcal{J}}}
\newcommand{\set}[1]{\left\{#1\right\}}
\newcommand{\fromto}[2]{#1, \dotsc, #2}
\newcommand{\setfromto}[2]{\set{\fromto{#1}{#2}}}
\DeclareMathOperator{\Hilb}{Hilb}
\DeclareMathOperator{\VSP}{VSP}
\newcommand{\VSPaff}{\VSP^{\mathrm{aff}}}
\newcommand{\ccB}{{\mathcal{B}}}
\newcommand{\ccQ}{{\mathcal{Q}}}
\newcommand{\ccS}{{\mathcal{S}}}
\newcommand{\ccT}{{\mathcal{T}}}

\begin{document}

\bibliographystyle{amsalpha}       % Set the bibliography style to AMS
                                % alphabetized. (Can use ``amsalpha'' or
                                % ``abbrv''instead.) [amsplain, plain]

\begin{abstract}
A Waring decomposition of a polynomial is an expression of the polynomial as a sum of powers of linear forms,
   where the number of summands is minimal possible.
We prove that any Waring decomposition of a monomial is obtained from a complete intersection ideal,
determine the dimension of the set of Waring decompositions,
and give the conditions under which the Waring decomposition is unique up to scaling the variables.
\end{abstract}

\maketitle

\section{Introduction}

Let $F \in \C[x_0,\dots,x_n]$ be a complex homogeneous polynomial of degree $d$.
The \defining{Waring rank} of $F$, denoted $R(F)$, is the least $r$ such that $F = {\ell_1}^d + \dotsb + {\ell_r}^d$
for some linear forms $\ell_1,\dotsc,\ell_r$.
Any expression $F = {\ell_1}^d + \dotsb + {\ell_r}^d$ with $r = R(F)$ is a \defining{Waring decomposition}.

For example, $xy = \frac{1}{4}(x+y)^2 - \frac{1}{4}(x-y)^2$
and $xyz = \frac{1}{24}(x+y+z)^3 - \frac{1}{24}(x+y-z)^3 - \frac{1}{24}(x-y+z)^3 + \frac{1}{24}(x-y-z)^3$.
Similar decompositions may be found for any monomial (see \textsection\ref{sec: explicit expression}).
These decompositions are not unique, as one may permute the variables and scale them:
for $xy$, replace $(x,y)$ with $(sx,\frac{1}{s} y)$
and for $xyz$, replace $(x,y,z)$ with $(sx,ty,\frac{1}{st}z)$.
For any monomial $F = x_0^{d_0} \dotsm x_n^{d_n}$, one may scale the variables $x_i$ by scalars $\lambda_i$ such that $\prod \lambda_i^{d_i} = 1$,
leaving the monomial fixed but changing the Waring decompositions.
It is natural to ask if all Waring decompositions are obtained in this way;
that is, whether Waring decompositions of monomials are unique up to scaling the variables.

Earlier studies of Waring decompositions have considered the question of uniqueness,
going back to classical results such as Sylvester's Pentahedral Theorem
and more recent work such as \cite{MR1780430,MR2439429},
mostly concentrating on actual uniqueness (not up to scaling) of Waring decompositions of general forms.
More generally, for $F \in \C[\fromto{x_0}{x_n}]$ homogeneous of degree $d$ and $r = R(F)$,
the \textit{variety of sums of powers} $\VSP(F)$ is the closure in $\Hilb_r(\PP^n)$ of the set $\VSP^{\circ}$ of reduced tuples
$\{\fromto{[\ell_1]}{[\ell_r]}\}$ such that $F = \ell_1^d + \dotsb + \ell_r^d$
(see \textsection\ref{sec: dim vsp}).
These varieties have proved to be of interest; see \cite{MR1201387,MR1780430,MR1806733}.

We describe $\VSP(F)$ and determine its dimension when $F = x_0^{d_0} \dotsm x_n^{d_n}$ is a monomial.
We answer the question of uniqueness of Waring decomposition up to scaling the variables,
which amounts to determining whether a torus action on $\VSP^{\circ}(F)$ is transitive.

Both \cite{Ranestad:2011fk} and \cite{Carlini_Catalisano_Geramita}
noted that a Waring decomposition $F = \ell_1^d + \dotsb + \ell_r^d$
may be obtained with $\{\fromto{[\ell_1]}{[\ell_r]}\}$ a complete intersection.
We show that in fact every Waring decomposition of $F$ is a complete intersection of a certain form.
\begin{thm}\label{thm_decomposition_is_a_complete_intersection}
  Suppose $F \in \C[\fromto{x_0}{x_n}]$ is a monomial $F = x_0^{d_0} \dotsm x_n^{d_n}$ with
     $0 < d_0 \leq \dotsb \leq d_n$, $d = d_0 + \dotsb + d_n$, and $F = {\ell_1}^d + \dotsb + {\ell_r}^d$ for $r = R(F)$.
  Let $\ccI \subset \C[\fromto{\alpha_0}{\alpha_n}]$
     be the homogeneous ideal of functions vanishing on $Q = \set{[\ell_1],\dotsc,[\ell_r]} \subset \PP^n$.
  Then $\ccI$ is a complete intersection of degrees $\fromto{d_1+1}{d_n+1}$,
     generated by:
  \[
     \fromto{{\alpha_1}^{d_1 +1} - \phi_1 {\alpha_0}^{d_0+1}}{{\alpha_n}^{d_n +1} - \phi_n {\alpha_0}^{d_0+1}}
  \]
  for some homogeneous polynomials $\phi_i \in \C[\fromto{\alpha_0}{\alpha_n}]$ of degree $d_i - d_0$.
\end{thm}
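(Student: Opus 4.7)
The plan is to combine the apolarity lemma with a length argument to establish the identity $\ccI + (\alpha_0^{d_0+1}) = F^\perp$, after which the desired generators fall out directly. Write $S = \C[\fromto{\alpha_0}{\alpha_n}]$, $B = S/\ccI$, and $A = S/F^\perp$. The apolarity lemma gives $\ccI \subset F^\perp = (\alpha_0^{d_0+1}, \dotsc, \alpha_n^{d_n+1})$, a complete intersection in $n+1$ variables, and I will use the Carlini--Catalisano--Geramita rank formula $r = \prod_{i=1}^n(d_i+1)$.

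The first observation is that $Q \cap V(\alpha_0) = \emptyset$ in $\PP^n$: any common zero of $\ccI$ and $\alpha_0$ would lie in $V(F^\perp + (\alpha_0)) = V(\alpha_0, \alpha_1^{d_1+1}, \dotsc, \alpha_n^{d_n+1})$, forcing every $\alpha_i$ to vanish. Hence $\alpha_0^{d_0+1}$ is a non-zero-divisor on $B$, yielding the short exact sequence $0 \to B(-d_0-1) \to B \to B/(\alpha_0^{d_0+1}) \to 0$. Writing the Hilbert series of $B$ as $P_B(t)/(1-t)$ with $P_B(1) = r$, one obtains $\dim_\C B/(\alpha_0^{d_0+1}) = (d_0+1)r = \prod_{i=0}^n(d_i+1) = \dim_\C A$. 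Since $\alpha_0^{d_0+1} \in F^\perp$ and $\ccI \subset F^\perp$, there is a natural surjection $B/(\alpha_0^{d_0+1}) \twoheadrightarrow A$; equality of $\C$-dimensions forces this to be an isomorphism, equivalent to $\ccI + (\alpha_0^{d_0+1}) = F^\perp$.

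Given this identity, taking the degree-$(d_i+1)$ component of the relation $\alpha_i^{d_i+1} \in \ccI + (\alpha_0^{d_0+1})$ yields for each $i = 1, \dotsc, n$ a homogeneous $\phi_i \in S_{d_i-d_0}$ with $\alpha_i^{d_i+1} - \phi_i \alpha_0^{d_0+1} \in \ccI$. Let $\ccI' \subset \ccI$ be the ideal these generate. Setting $\alpha_0 = 0$ collapses the generators to $\alpha_i^{d_i+1}$, so $V(\ccI') \cap V(\alpha_0) = \emptyset$; in the affine chart $\alpha_0 = 1$, the relations $\alpha_i^{d_i+1} = \phi_i|_{\alpha_0=1}$ (whose right-hand sides have degree at most $d_i - d_0 < d_i+1$, thanks to $d_0 \geq 1$) reduce any monomial to one with all exponents $b_i \leq d_i$, so the coordinate ring of $V(\ccI')$ has $\C$-dimension at most $\prod_{i=1}^n(d_i+1) = r$. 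Since $Q \subset V(\ccI')$ already has $r$ points, $V(\ccI') = Q$ as reduced schemes; then $\ccI'$ is a complete intersection of codimension $n$, hence Cohen--Macaulay of positive dimension and therefore saturated, so $\ccI' = \ccI$. The main technical hinge is the length match in the middle paragraph, which rests jointly on the non-zero-divisor property and the explicit rank formula.
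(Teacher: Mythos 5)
Your first observation, that $Q \cap V(\alpha_0) = \emptyset$, is exactly the crux of the matter, but the justification you give for it is backwards. You argue that a common zero $p$ of $\ccI$ and $\alpha_0$ must lie in $V(F^\perp + (\alpha_0))$; however, the apolarity lemma gives only $\ccI \subseteq F^\perp$, which yields $V(\ccI) \supseteq V(F^\perp)$, not the reverse. A point of $V(\ccI)$ need not annihilate $F^\perp$ (indeed $V(F^\perp) = \emptyset$ in $\PP^n$ while $V(\ccI) = Q$ is nonempty), so knowing $p \in V(\ccI + (\alpha_0))$ tells you nothing about $p$ lying in $V(F^\perp + (\alpha_0))$. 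The assertion that no $[\ell_i]$ lies on the hyperplane $\alpha_0 = 0$ is true, but it is the genuinely hard part of the argument: in the paper it is the content of Lemmas~\ref{lem: bound on qt}--\ref{lem: I colon alpha_0 equals I} (proving $\ccI : \alpha_0 = \ccI$ via a careful Hilbert function comparison), and the corollary following Proposition~\ref{prop: Hilbert function agrees} is precisely the statement you are taking for granted. Without it, the regular-sequence exact sequence, the length count, and the identity $\ccI + (\alpha_0^{d_0+1}) = F^\perp$ all collapse.

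Once that gap is repaired — for instance, by invoking $\ccI : \alpha_0 = \ccI$ as a separate lemma proved via the paper's Hilbert function argument — the remainder of your proof is correct and is a genuinely different route from the paper's. The paper constructs the generators of $\ccI$ inductively degree by degree (Proposition~\ref{prop:any_solution_is_ccI_n_bar_phi}), eliminating lower-order terms and showing the coefficient matrix is invertible via radicality. Your approach instead derives $\ccI + (\alpha_0^{d_0+1}) = F^\perp$ in one shot from a length match, reads the generators directly off this equality, and then closes the loop with a separate degree-lowering bound to show the resulting complete intersection already cuts out $Q$. That global argument is cleaner conceptually and would make an attractive alternative exposition, but only after the non-zero-divisor claim is properly established.
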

As a consequence of this and some additional restrictions on the polynomials $\phi_i$
we compute the dimension of the variety of sums of powers of a monomial.
\begin{thm}\label{thm_dim_VSP}
  Suppose $F \in \C[\fromto{x_0}{x_n}]$ is a monomial $F = x_0^{d_0} \dotsm x_n^{d_n}$ with
     $0 < d_0 \leq \dotsb \leq d_n$.
     Let $h$ be the Hilbert function of %the quotient
     $\C[\fromto{x_0}{x_n}]/(\fromto{x_1^{d_1+1}}{x_n^{d_n+1}})$.
     Then $\VSP(F)$ is irreducible and $\dim \VSP(F) = h(d_1-d_0) + \dotsb + h(d_n-d_0)$.
\end{thm}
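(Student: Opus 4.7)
The plan is to realise $\VSP(F)$ as the closure of the image of a natural morphism from an affine space, and to compute this image's dimension via a fiber calculation. Set $S = \C[\fromto{\alpha_0}{\alpha_n}]$ and let
\[
  \mathcal{P} := S_{d_1 - d_0} \oplus \dotsb \oplus S_{d_n - d_0}
\]
be the affine space parametrising tuples $\phi = (\fromto{\phi_1}{\phi_n})$. To each $\phi$ associate the ideal
\[
  \ccI(\phi) := \bigl(\fromto{\alpha_1^{d_1+1} - \phi_1 \alpha_0^{d_0+1}}{\alpha_n^{d_n+1} - \phi_n \alpha_0^{d_0+1}}\bigr).
\]
Setting $\alpha_0 = 0$ kills the $\phi$-terms and reduces these generators to $\fromto{\alpha_1^{d_1+1}}{\alpha_n^{d_n+1}}$, whose common projective zero locus is empty. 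Hence the scheme cut out by $\ccI(\phi)$ is disjoint from the hyperplane $\{\alpha_0 = 0\}$, the generators form a regular sequence, $\ccI(\phi)$ defines a length-$r$ complete intersection in $\PP^n$, and $\alpha_0^{d_0+1}$ is a non-zero-divisor on $S/\ccI(\phi)$. This yields a morphism $\Psi : \mathcal{P} \to \Hilb_r(\PP^n)$.

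Next I analyse the fibers. If $\Psi(\phi) = \Psi(\phi')$, subtracting paired generators gives $(\phi_i - \phi_i')\alpha_0^{d_0+1} \in \ccI(\phi)$; the non-zero-divisor property forces $\phi_i - \phi_i' \in \ccI(\phi)_{d_i - d_0}$ for every $i$, and the converse is immediate. So each fiber of $\Psi$ is the affine translate $\phi + \bigoplus_i \ccI(\phi)_{d_i - d_0}$. Because the Hilbert function of a graded complete intersection depends only on the degrees of the generating regular sequence, $S/\ccI(\phi)$ shares its Hilbert function $h$ with $\C[\fromto{x_0}{x_n}]/(\fromto{x_1^{d_1+1}}{x_n^{d_n+1}})$ after identifying $\alpha_j$ with $x_j$. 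Thus
\[
  \dim_{\C} \ccI(\phi)_{d_i - d_0} = \dim_{\C} S_{d_i - d_0} - h(d_i - d_0),
\]
and every fiber of $\Psi$ has codimension $\sum_{i=1}^{n} h(d_i - d_0)$ in $\mathcal{P}$.

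It remains to match $\Psi$ with $\VSP(F)$. By Theorem~\ref{thm_decomposition_is_a_complete_intersection}, every element of $\VSP^{\circ}(F)$ is realised as $\Psi(\phi)$ for some $\phi$; combined with the open conditions that $V(\ccI(\phi))$ be reduced and that the corresponding apolar reconstruction of $F$ have no vanishing coefficient, this gives $\VSP^{\circ}(F) = \Psi(\mathcal{P}^{\circ})$ for some non-empty open $\mathcal{P}^{\circ} \subset \mathcal{P}$. Since $\mathcal{P}$ is an affine space, $\mathcal{P}^{\circ}$ is irreducible of dimension $\dim \mathcal{P}$; its image is therefore an irreducible constructible set of dimension $\sum_{i} h(d_i - d_0)$, whose Zariski closure is the irreducible variety $\VSP(F)$ of the same dimension. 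The key technical step is the fiber computation, resting on (i) the non-zero-divisor status of $\alpha_0^{d_0+1}$ modulo $\ccI(\phi)$ and (ii) the invariance of Hilbert functions along flat families of graded complete intersections of fixed generating degrees.
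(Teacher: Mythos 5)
Your proof is correct, but takes a genuinely different route from the paper. The paper (Propositions~\ref{prop: unique generators} and \ref{prop: general restricted bar phi give radical}) normalizes $\bar\phi$ by requiring that no term of any $\phi_i$ lie in $\ccJ = (\alpha_1^{d_1+1},\dotsc,\alpha_n^{d_n+1})$, thereby cutting out a linear slice $\ccB'$ of dimension $\sum h(d_i-d_0)$ that meets each fiber of your map $\Psi$ exactly once, and the dimension of $\VSP(F)$ is then simply $\dim\ccB'$. You instead work with the full affine space $\mathcal P = \bigoplus S_{d_i-d_0}$ and compute the dimension of a generic fiber of $\Psi$ directly, using that $\alpha_0^{d_0+1}$ is a non-zero-divisor modulo $\ccI(\phi)$ (which holds because $V(\ccI(\phi))$ misses $\{\alpha_0=0\}$ and, being a zero-dimensional complete intersection, $\ccI(\phi)$ is saturated with no embedded primes). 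Both arguments rest on the same underlying fact that $S/\ccI(\phi)$ has the fixed Hilbert function $h$; the paper uses it to prove uniqueness of the normal form, while you use it to show the fibers all have constant dimension $\sum(\dim S_{d_i-d_0}-h(d_i-d_0))$. Two small remarks: (i) the ``converse is immediate'' step deserves a word --- from $\phi'_i-\phi_i\in\ccI(\phi)$ you get only $\ccI(\phi')\subseteq\ccI(\phi)$, and equality then follows because both are complete intersections with identical Hilbert functions; (ii) your separate open condition that the apolar reconstruction of $F$ have no vanishing coefficients is automatic: if $\ccI(\phi)\subseteq F^\perp$ is radical and $V(\ccI(\phi))$ has length $r=R(F)$, then by the Apolarity Lemma $F$ is a combination of the corresponding $r$-th powers, and minimality of $r$ forces every coefficient to be nonzero, so radicality alone defines $\mathcal P^\circ$. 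Neither remark affects the validity of the argument.
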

\begin{cor}\label{cor: dim of VSP for equal exponents}
$\dim \VSP(F) \geq n$, with equality if and only if $F = (x_0 \dotsm x_n)^k$.
\end{cor}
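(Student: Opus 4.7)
The plan is to deduce the corollary directly from the formula $\dim \VSP(F) = h(d_1 - d_0) + \dotsb + h(d_n - d_0)$ given by Theorem~\ref{thm_dim_VSP}, by analyzing the minimum possible value of the Hilbert function $h$ of $R := \C[x_0, \dotsc, x_n]/(x_1^{d_1+1}, \dotsc, x_n^{d_n+1})$.

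First I would observe that since the variable $x_0$ does not appear in any generator of the defining ideal, the monomial $x_0^t$ represents a nonzero class in $R$ for every $t \geq 0$. Hence $h(t) \geq 1$ for all $t \geq 0$, and the sum of $n$ such terms is at least $n$, proving $\dim \VSP(F) \geq n$.

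Next I would characterize when equality $h(t) = 1$ holds. For $t \geq 1$, using the hypothesis $d_1 \geq d_0 \geq 1$, the monomial $x_0^{t-1}x_1$ also descends to a nonzero class in $R$ (distinct from $x_0^t$), so $h(t) \geq 2$. Thus $h(t) = 1$ precisely when $t = 0$. Combined with the formula of Theorem~\ref{thm_dim_VSP}, equality $\dim \VSP(F) = n$ forces $d_i - d_0 = 0$ for each $i = 1, \dotsc, n$, i.e.\ $d_0 = d_1 = \dotsb = d_n$. Writing $k$ for this common value yields $F = (x_0 \dotsm x_n)^k$. Conversely, if $F = (x_0 \dotsm x_n)^k$, every $d_i - d_0$ equals zero and the sum is exactly $n$.

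There is no real obstacle here, as the corollary is essentially a transparent consequence of Theorem~\ref{thm_dim_VSP} combined with the fact that a Hilbert function of a nonzero graded ring is pointwise at least $1$ in degrees where the ring does not vanish; the only mild subtlety is verifying that $x_0^{t-1}x_1$ is nonzero in $R$, which is immediate from $d_1 \geq 1$.
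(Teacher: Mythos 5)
Your proof is correct and follows essentially the same route as the paper: apply the dimension formula from Theorem~\ref{thm_dim_VSP} and show $h(t)\geq 2$ for $t\geq 1$ while $h(0)=1$. The only cosmetic difference is that you exhibit the two explicit monomials $x_0^t$ and $x_0^{t-1}x_1$, whereas the paper observes $h(1)=\dim S_1\geq 2$ and uses monotonicity of $h$ (via multiplication by $\alpha_0$) to get $h(t)\geq h(1)$ for $t\geq 1$.
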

Finally we answer the uniqueness question.
\begin{thm}\label{thm_decomposition_uniqueness}
  Suppose $F \in \C[\fromto{x_0}{x_n}]$ is a monomial $F = x_0^{d_0} \dotsm x_n^{d_n}$ with
     $0 < d_0 \leq \dotsb \leq d_n$.
  Let $(\C^*)^{n+1}$ act on $\C[\fromto{x_0}{x_n}]$ by scaling the variables.
  The action of the $n$-dimensional subtorus $\ccT = \{(\fromto{\lambda_0}{\lambda_n}) \mid \prod \lambda_i^{d_i} = 1 \}$
  on $\VSP^{\circ}(F)$ is transitive if and only if $d_0 = \dotsb = d_n$.
\end{thm}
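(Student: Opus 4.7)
The plan is to treat the two directions very differently: the ``only if'' direction is a pure dimension count, while the ``if'' direction uses the explicit form of Waring decompositions from Theorem~\ref{thm_decomposition_is_a_complete_intersection}. For the ``only if'' direction, if $\ccT$ acts transitively on $\VSP^\circ(F)$ then $\dim \VSP(F) \leq \dim \ccT = n$; by Corollary~\ref{cor: dim of VSP for equal exponents} this forces $F = (x_0\dotsm x_n)^k$, i.e.\ $d_0 = \dotsb = d_n$.

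For the ``if'' direction, assume $d_0 = \dotsb = d_n = k$, so $r = R(F) = (k+1)^n$. By Theorem~\ref{thm_decomposition_is_a_complete_intersection}, the ideal of every Waring decomposition has the form $\ccI_\phi = (\alpha_1^{k+1} - \phi_1 \alpha_0^{k+1}, \dotsc, \alpha_n^{k+1} - \phi_n \alpha_0^{k+1})$ with each $\phi_i$ of degree $d_i - d_0 = 0$, hence a scalar in $\C$. Since $V(\ccI_\phi) \subset \PP^n$ must be reduced of length $(k+1)^n$, each $\phi_i$ is forced to be nonzero (otherwise $\alpha_i^{k+1}$ would be a generator, producing a non-reduced component on the hyperplane $\{\alpha_i = 0\}$). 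I would then verify that the map $\Phi : \VSP^\circ(F) \to (\C^*)^n$, sending $[\ccI_\phi]$ to $(\phi_1, \dotsc, \phi_n)$, is well-defined and injective, using the fact that $\alpha_0$ does not vanish at any point of $V(\ccI_\phi)$, so $\alpha_0^{k+1} \notin \ccI_\phi$ and distinct parameter tuples produce distinct ideals.

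Finally I would work out the induced $\ccT$-action on the parameters. Scaling $x_i \mapsto \lambda_i x_i$ corresponds dually to $\alpha_i \mapsto \lambda_i^{-1} \alpha_i$, so the generator $\alpha_i^{k+1} - \phi_i \alpha_0^{k+1}$ rescales (after normalizing the leading coefficient) to $\alpha_i^{k+1} - (\lambda_i/\lambda_0)^{k+1}\phi_i \alpha_0^{k+1}$; thus $\Phi$ is $\ccT$-equivariant for the action $\phi_i \mapsto (\lambda_i/\lambda_0)^{k+1}\phi_i$ on $(\C^*)^n$. Given any target $(\phi'_i)$, I would set $\lambda_0 = t$ and $\lambda_i = t\zeta_i$ where $\zeta_i$ is a chosen $(k+1)$-th root of $\phi'_i/\phi_i$; the constraint $\prod_{i=0}^n \lambda_i^{k} = 1$ then reduces to a single polynomial equation in $t$ of positive degree $k(n+1)$, which is solvable in $\C^*$. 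Combined with injectivity and equivariance of $\Phi$, this yields transitivity on $\VSP^\circ(F)$.

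The step I expect to demand the most care is establishing well-definedness and injectivity of $\Phi$, and in particular verifying that the tuple $(\phi_1,\dotsc,\phi_n)$ is intrinsic to the ideal rather than to a particular presentation of its generators; the torus arithmetic itself is routine.
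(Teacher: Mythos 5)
Your proof is correct and follows the same two-step strategy as the paper: the ``only if'' direction is the dimension count via Corollary~\ref{cor: dim of VSP for equal exponents}, and the ``if'' direction uses Theorem~\ref{thm_decomposition_is_a_complete_intersection} to reduce every decomposition ideal to the normal form $(\alpha_i^{k+1} - \phi_i\alpha_0^{k+1})$ with nonzero scalars $\phi_i$ and then moves the parameter tuple by an element of $\ccT$. One remark worth making: your handling of the torus arithmetic is actually more careful than the paper's own sketch. The paper writes ``scaling each $x_i$ by $\phi_i^{-1/k}$,'' which, given the induced action $\phi_i \mapsto (\lambda_i/\lambda_0)^{k+1}\phi_i$, should read $\phi_i^{-1/(k+1)}$, and it does not explicitly check that the chosen scaling satisfies the constraint $\prod\lambda_j^{k}=1$ defining $\ccT$; your device of leaving $\lambda_0 = t$ free, taking $\lambda_i = t\zeta_i$ with $\zeta_i^{k+1}=\phi_i'/\phi_i$, and solving the single polynomial equation $t^{k(n+1)}\prod\zeta_i^{k}=1$ in $\C^*$ supplies precisely this missing normalization step. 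The well-definedness and injectivity of $\Phi$, which you flag as the delicate point, is indeed straightforwardly disposed of as you indicate (since $\alpha_0$ is a nonzerodivisor modulo $\ccI$ by Lemma~\ref{lem: I colon alpha_0 equals I}, so $\alpha_0^{k+1}\notin\ccI$), and in any case transitivity only requires that every orbit meets the fixed point $(1,\dotsc,1)$, so a one-sided argument already suffices.
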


This uniqueness had been shown for $F = xyz$ by Bruce Reznick (2008, personal communication).
Despite the very classical nature of the subject,
it was not possible to address these questions in greater generality until very recently,
as Waring ranks of monomials were only determined in 2011.
This is remarkable when one considers that Waring ranks have been studied for at least 160 years.
Indeed, the ranks and decompositions of quadratic forms were understood classically.
The ranks and decompositions of polynomials in two variables are completely understood,
following work by Sylvester in 1851 \cite{Sylvester:1851kx} and recent work by Comas and Seiguer \cite{MR2754189}.
Beyond these cases it is a difficult problem to determine or even to give decent bounds on $R(F)$,
even for seemingly simple polynomials such as monomials.
For much more on this topic, including history, see \cite{MR1735271} and more recently \cite{Reznick:2010lr}.
Regarding monomials, the paper \cite{Landsberg:2009yq} found $R(xyz) = 4$, $R(xyzw) = 8$, and $R(xyz^2) = 6$.
A recent paper of Ranestad and Schreyer \cite{Ranestad:2011fk} gives a lower bound for rank (of any homogeneous polynomial)
which, for monomials, has the following consequence:
If $F = {x_0}^{d_0} \dotsm {x_n}^{d_n}$ is a monomial,
$d_0 \leq \dotsb \leq d_n$, then
\begin{equation}\label{eq: RS}
  R(F) \geq (d_0 + 1)(d_1 + 1) \dotsm (d_{n-1} + 1) .
\end{equation}
And conversely, it was communicated to us by K.~Ranestad \cite[p.~15]{Ranestad:slides} that
\begin{equation}\label{eq: upperbound}
  R(F) \leq (d_1 + 1) \dotsm (d_{n-1} + 1)(d_n + 1) .
\end{equation}
As a special case, when $d_0= \dotsb =d_n$,
this determines the rank $R((x_0 \dotsm x_n)^d) = (d+1)^n$, see \cite{Ranestad:2011fk}.
The proof of this upper bound described in \cite{Ranestad:2011fk} uses the Bertini theorem;
another proof is given in \cite{Carlini_Catalisano_Geramita}.
We give a third, elementary proof below.

Finally, in 2011 the Waring rank problem for monomials was solved
by Carlini, Catalisano, and Geramita \cite{Carlini_Catalisano_Geramita}.

\begin{thm}[Carlini, Catalisano, Geramita]\label{thm_rank_of_monomials}
The rank of a monomial $x_0^{d_0}\cdots x_n^{d_n}$ with
$d_0\leq\cdots\leq d_n$ is equal to
$(d_1+1)\dotsm(d_n+1)$.
\end{thm}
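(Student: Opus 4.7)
The plan is to prove both the upper bound $R(F)\le(d_1+1)\dotsm(d_n+1)$ of \eqref{eq: upperbound} and the matching lower bound, via the apolarity lemma together with the fact that for a monomial, $F^\perp=(\alpha_0^{d_0+1},\dotsc,\alpha_n^{d_n+1})$.

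For the upper bound, apolarity reduces the task to exhibiting a reduced subscheme $\Gamma\subset\PP^n$ of length $(d_1+1)\dotsm(d_n+1)$ with $I(\Gamma)\subset F^\perp$. The complete intersection defined by $\alpha_i^{d_i+1}-\alpha_0^{d_i+1}$ for $i=1,\dotsc,n$ works: each generator lies in $F^\perp$ because $\alpha_0^{d_i+1}$ is a multiple of $\alpha_0^{d_0+1}$ (as $d_i\ge d_0$), and the $\prod_{i=1}^n(d_i+1)$ explicit solutions $[1{:}\zeta_1{:}\dotsb{:}\zeta_n]$ with $\zeta_i^{d_i+1}=1$ are distinct.

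For the lower bound it suffices to show that every reduced finite $\Gamma\subset\PP^n$ with $I(\Gamma)\subset F^\perp$ satisfies $|\Gamma|\ge(d_1+1)\dotsm(d_n+1)$, which I would argue by induction on $n$. The base case $n=1$ is essentially Sylvester: if $I(\Gamma)=(g)$ with $g$ square-free and $g=u\alpha_0^{d_0+1}+v\alpha_1^{d_1+1}$, then $\deg g\le d_1$ would force $v=0$, hence $\alpha_0^{d_0+1}\mid g$, contradicting square-freeness since $d_0\ge 1$. For the induction step I would project $\Gamma$ from $p=[0{:}\dotsb{:}0{:}1]$ to $\PP^{n-1}$. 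On any line $\ell$ through $p$ meeting $\Gamma$, the restriction of $I(\Gamma)$ lies in $F^\perp|_\ell=(s^{d_0+1},t^{d_n+1})$, so the base case yields $|\Gamma\cap\ell|\ge d_n+1$. Simultaneously, the elimination ideal $I(\Gamma)\cap\C[\alpha_0,\dotsc,\alpha_{n-1}]$ sits inside $G^\perp$ for $G=x_0^{d_0}\dotsm x_{n-1}^{d_{n-1}}$, so the inductive hypothesis lower-bounds the number of distinct fibers by $(d_1+1)\dotsm(d_{n-1}+1)$, giving the desired $|\Gamma|\ge(d_n+1)(d_1+1)\dotsm(d_{n-1}+1)$.

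The main obstacle is that the scheme-theoretic image of the projection need not be reduced, and passing to its reduction can break the inclusion in $G^\perp$ since $G^\perp$ is generated by pure powers and is not radical. Resolving this calls either for strengthening the inductive statement to track scheme-theoretic fibers with multiplicities, or for sidestepping induction entirely by invoking Theorem~\ref{thm_decomposition_is_a_complete_intersection}: once any minimal decomposition is known to place its points on a complete intersection of degrees $d_1+1,\dotsc,d_n+1$, B\'ezout immediately yields $r=\prod_{i=1}^n(d_i+1)$.
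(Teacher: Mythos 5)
Your upper bound is the paper's own (it is exactly the explicit complete intersection $\alpha_i^{d_i+1}-\alpha_0^{d_i+1}$ from \textsection\ref{sec: explicit expression}), but your lower bound is a genuinely different route: a geometric induction by projecting $\Gamma=V(\ccI)$ from a point, where the paper instead compares Hilbert functions of $\ccI$ and $\ccI\cap(\alpha_0)$ via the linear-algebra Lemma~\ref{codim_linear_algebra_fact} and the colon-ideal Lemmas~\ref{lem: bound on qt}--\ref{lem: dim I_t equals q_{t+1}}. That algebraic route entirely sidesteps the geometry of projections and fibers.

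Your own diagnosis of the obstacle is, however, off target, and there is a more serious one you don't mention. The scheme-theoretic image is not the issue: if $p\notin\Gamma$, the image of the affine cone over $\Gamma$ under the linear projection is itself a union of lines through the origin, hence already closed and equal to the affine cone over $\pi(\Gamma)$; consequently the elimination ideal $\ccI\cap\C[\alpha_0,\dotsc,\alpha_{n-1}]$ is radical \emph{and} saturated, and sits inside $G^\perp$ as you want. The real gap is in the fiber count. To apply the binary base case to $\Gamma\cap\ell$ you need the square-free generator $g$ of $I(\Gamma\cap\ell)$ to lie in $F^\perp|_\ell$, but what you actually control is only the restricted ideal $\ccI|_\ell\subset F^\perp|_\ell$, which is a sub-ideal of $(g)$ that need not be saturated (indeed $F^\perp|_\ell$ is irrelevant, so saturating the inclusion gives no information). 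Showing $g\in\ccI|_\ell$, or otherwise bounding $\deg g$ from $\ccI|_\ell\subset F^\perp|_\ell$, is precisely the kind of degree-by-degree control that the paper's Lemmas~\ref{lem: bound on qt} and \ref{lem: dimension of I vs q_t} supply and that your sketch does not. A second (smaller, and fixable) omission is the possibility that the projection center lies in $\Gamma$; one can dispose of it by noting that a point of $\Gamma$ on $\ell$ through $p$ forces one more intersection point, but you don't address it.

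Finally, the proposed fallback of invoking Theorem~\ref{thm_decomposition_is_a_complete_intersection} plus B\'ezout is circular: that theorem presupposes $r=R(F)$ and its proof in the paper (via Propositions~\ref{prop: Hilbert function agrees} and \ref{prop:any_solution_is_ccI_n_bar_phi}) already uses the computation of the rank carried out in \eqref{eq: asymptotic bound}. You cannot use it to establish the rank in the first place.
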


That is, the rank is equal to the upper bound found by Ranestad and Schreyer.

It is this result of Carlini, Catalisano, and Geramita
which opens the possibility of studying $\VSP$ of monomials.
We give an alternative proof along the way to proving our theorems.
Note, however, that their proof is remarkably direct.

\subsection*{Acknowledgements}

Jaros\l{}aw Buczy\'nski is supported by Maria Sk\l{}odowska-Curie Fellowship ``Contact Manifolds''.

Weronika Buczy\'nska is supported by the grant ``Rangi i rangi brzegowe wielomian\'ow oraz r\'ownania rozmaito\'sci siecznych'',
which is part of the program ``Iuventus Plus'' funded by Polish Ministry of Science and Higher Education.

The authors would like to thank Enrico Carlini and Anthony Geramita for useful comments that helped to improve the exposition.

\subsection*{Notation}

Except in \textsection\ref{sec: explicit expression} we work over $\C$.

We recall a few well-known facts about the apolarity pairing; see, for example, \cite[\textsection 1.1]{MR1735271} for details.
Let $T = \C[x_0,\dotsc,x_n]$ and $S = \C[\alpha_0,\dotsc,\alpha_n]$.
Elements $D \in S$ act on polynomials $F \in T$ by letting $\alpha_i$ act as the differentiation operator $\partial / \partial x_i$.
The induced pairing $S_a \otimes T_d \to T_{d-a}$ (where $T_k = 0$ for $k < 0$) is called the apolarity pairing; it is a perfect pairing when $d=a$.
For a fixed $F \in T$, the set $F^\perp = \{D \in S : DF = 0\}$ is an ideal in $S$, called the annihilator of $F$.
Suppose $F$ is a homogeneous polynomial of degree $d$ and $F = \ell_1^d + \dotsb + \ell_r^d$ for linear forms $\ell_1, \dotsc, \ell_r$.
Consider the set of points $\{[\ell_1], \dotsc, [\ell_r]\}$ in the projective space $\PP^n$.
The defining ideal $\ccI = \ccI(\{[\ell_1], \dotsc, [\ell_r]\})$ of this set of points is a saturated radical homogeneous ideal with $\ccI \subseteq F^\perp$.

\begin{remark*}
     Suppose $F \in \C[\fromto{x_0}{x_n}, \fromto{y_1}{y_m}]$
        is a homogeneous polynomial that depends only on the first $n+1$ variables  $\fromto{x_0}{x_n}$,
        but considered as a polynomial in $n+m+1$ variables.
     If $F = \ell_1^d + \dotsb + \ell_r^d$ is a Waring decomposition (i.e. $r = R(F)$)
        with $\ell_i$ linear forms apriori in $n+m+1$ variables,
        then in fact each $\ell_i$ depends only on $\fromto{x_0}{x_n}$, see \cite[Ex.~3.2.2.2]{Ltensor}.
     Thus one can easily generalize Theorems~\ref{thm_decomposition_is_a_complete_intersection}
        and \ref{thm_dim_VSP} to any monomials, i.e., without the assumption $d_i >0$. 
\end{remark*}

\section{Explicit expression for monomials as sums of powers}\label{sec: explicit expression}

Let $F = x^{\dvec} = {x_0}^{d_0} \dotsm {x_n}^{d_n} \in \kk[x_0,\dotsc,x_n]$ with $0 < d_0 \leq \dotsb \leq d_n$ and let $d = d_0 + \dotsb + d_n$.
For $\kk = \C$, Corollary~3.8 of \cite{Carlini_Catalisano_Geramita} shows that there is a power sum decomposition
\[
  x_0^{d_0} \dotsm x_n^{d_n} = \sum_{\substack{0 \leq a_i \leq d_i \\ i = 1, \dotsc, n}} (x_0 + \zeta_1^{a_1} x_1 + \dotsb + \zeta_n^{a_n} x_n)^d \gamma_{a_1,\dotsc,a_n} ,
\]
where $\zeta_i$ is a primitive $(d_i+1)$-th root of unity for $i = \fromto{1}{n}$, for some coefficients $\gamma$.

In this section, we give the $\gamma$ explicitly.
For this section only, we do not work over $\C$, as we wish to consider the most general field possible.

While we reserve $R(F)$ for the Waring rank of a complex polynomial, one may ask similar questions over other fields.
Given $F \in \kk[x_0,\dotsc,x_n]$, the Waring rank of $F$ with respect to $\kk$, denoted $R_{\kk}(F)$, is the least $r$ such that
$F = c_1 \ell_1^d + \dotsb + c_r \ell_r^d$ for some constants $c_i \in \kk$ and linear forms $\ell_i$ with coefficients in $\kk$.
(When $\kk = \C$, or more generally if $\kk$ is algebraically closed, the coefficients are unnecessary, as $c_i \ell_i^d$ can be replaced by $(c_i^{1/d} \ell_i)^d$.)

In this generality rank may not always be finite, because some polynomials are not sums of powers.
For example, $xy$ is not a sum of squares in characteristic $2$ and $xyz$ is not a sum of cubes in characteristics $2$ or $3$.

If $F$ is a polynomial over $\kk$ and $\kk \subseteq \bbK$ then $R_{\kk}(F) \geq R_{\bbK}(F)$.
In general it may be strictly greater; see, for example, \cite{Oldenburger,Reznick:2010lr}.

Suppose $\kk$ is a field such that $(d_i+1)$-th roots of unity exist in $\kk$ for $i = 1,\dotsc,n$ and $C$ is invertible, where
\[  C = \binom{d}{d_0,d_1,\dotsc,d_n} (d_1+1) \dotsm (d_n+1) , \]
where $\binom{d}{d_0,d_1,\dotsc,d_n}$ is the multinomial coefficient $d! / (d_0! \dotsm d_n!)$.
For $1 \leq i \leq n$ let $\zeta_i \in \kk$ be a primitive $(d_i+1)$-th root of unity. %, such as $\zeta_i = \exp(2\pi\sqrt{-1}/(d_i+1))$ if $\kk = \C$.
We claim that
\begin{equation}\label{eq: monomial expression}
  x_0^{d_0} \dotsm x_n^{d_n} =
      \frac{1}{C}
      \sum_{\substack{0 \leq a_i \leq d_i \\ i = 1, \dotsc, n}}
      (x_0 + \zeta_1^{a_1} x_1 + \dotsb + \zeta_n^{a_n} x_n)^d (\zeta_1^{a_1} \dotsm \zeta_n^{a_n} ) .
\end{equation}
In particular this expression has $(d_1+1) \dotsm (d_n+1)$ summands, implying
$R_{\kk}(F) \leq (d_1 + 1) \dotsm (d_n + 1)$,
which is \eqref{eq: upperbound} when $\kk = \C$.

Consider the monomial $x^{\mvec} = x_0^{m_0} \dotsm x_n^{m_n}$ for some $\mvec = (m_0,\dotsc,m_n)$ such that $m_0 + \dotsb + m_n = d$.
The coefficient of this monomial in the right hand side of the above equation is
\[
  C_{\mvec} = \frac{1}{C} \binom{d}{m_0,\dotsc,m_n} \sum_{\substack{0 \leq a_i \leq d_i \\ i = 1, \dotsc, n}}
 \zeta_1^{a_1(m_1+1)} \dotsm \zeta_n^{a_n(m_n+1)} .
\]
This factors as
\[
  \frac{1}{C} \binom{d}{m_0,\dotsc,m_n} \left( \sum_{a_1 = 0}^{d_1} (\zeta_1^{m_1+1})^{a_1} \right)  \dotsm  \left( \sum_{a_n = 0}^{d_n} (\zeta_n^{m_n+1})^{a_n} \right) .
\]
Since each $\zeta_i^{m_i + 1}$ is still a $(d_i+1)$-th root of unity (not necessarily primitive), we have
\[
  \sum_{a_i = 0}^{d_i} (\zeta_i^{m_i + 1})^{a_i} =
  \begin{cases}
      d_i + 1, & \zeta_i^{m_i+1} = 1, \\
      0 & \text{otherwise}
  \end{cases}
\]
Hence,
\[
  C_{\mvec} =
  \begin{cases}
      \frac{1}{C} \binom{d}{m_0,\dotsc,m_n} (d_1 + 1) \dotsm (d_n + 1), & \zeta_1^{m_1+1} = \dotsb = \zeta_n^{m_n+1} = 1 \\
      0 & \text{otherwise}
  \end{cases}
\]
In particular $C_{\dvec} = 1$, that is, $x^{\dvec} = x_0^{d_0} \dotsm x_n^{d_n}$ appears on the right hand side of \eqref{eq: monomial expression} with coefficient $1$.

Suppose $x^{\mvec}$ has nonzero coefficient in \eqref{eq: monomial expression}.
Then by above $\zeta_i^{m_i + 1} = 1$ for  $i = 1, \dotsc, n$,
   and  we see each $m_i + 1$ is a multiple of $d_i + 1$ for $i > 0$.
Together with $m_0 + m_1 + \dotsb + m_n = d_0 + \dotsb + d_n$ we get $m_0 \leq d_0$.
If for some $i > 0$ we have $m_i + 1 > d_i + 1$, then $m_i + 1 \geq 2(d_i+1)$ and so $m_0 \leq d_0 - (d_i+1) < 0$, since $d_0 \leq \dotsb \leq d_n$.
This contradiction shows that the only term with a nonzero coefficient in \eqref{eq: monomial expression}
is the term having each $m_i + 1 = d_i + 1$, i.e., the term $x^{\dvec}$.

\section{Hilbert function}

Now we begin working toward a proof of Theorem \ref{thm_decomposition_is_a_complete_intersection},
along the way giving an alternative proof of Theorem \ref{thm_rank_of_monomials}.
Henceforth $F = x_0^{d_0} \dotsm x_n^{d_n}$ and $\ccI$ is as described 
  in Theorem~\ref{thm_decomposition_is_a_complete_intersection}.
We start by computing the Hilbert function of $\ccI$.

From this section onwards we work over the field $\kk = \C$.

Let $\ccJ \subset S$ be a complete intersection ideal generated in degrees $d_1+1, \dotsc, d_n+1$.
Note that $\dim(S/\ccJ)_t = (d_1+1)\dotsm(d_n+1)$ for $t \gg 0$.

%The following proposition is a crucial step in the proof of Theorem~\ref{thm_decomposition_is_a_complete_intersection} and provides an alternative proof of Theorem~\ref{thm_rank_of_monomials}.
\begin{prop}\label{prop: Hilbert function agrees}
   Suppose $F = x_0^{d_0} \dotsm x_n^{d_n} = \ell_1^d + \dotsb + \ell_r^d$ with $r$ minimal possible,
      i.e., $r = R(F)$.
   Let $\ccI = \ccI(\{[\ell_1], \dotsc , [\ell_r]\})$.
   Then the Hilbert function of $\ccI$ is equal to the Hilbert function of $\ccJ$.
\end{prop}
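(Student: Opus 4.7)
Proof proposal. The plan is to sandwich $H_\ccI(t)$ between $h(t) := \dim(S/F^\perp)_t$ and $g(t) := \dim(S/\ccJ)_t$, then combine with a symmetric identity coming from apolarity to close the gap.

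First I compute $h$ and $g$. Since $F^\perp = (\alpha_0^{d_0+1}, \dotsc, \alpha_n^{d_n+1})$ is a monomial complete intersection, $h(t)$ is the number of monomials $\alpha^{\mathbf{a}}$ with $|\mathbf{a}| = t$ and $a_i \leq d_i$; the Gorenstein property of $S/F^\perp$ (socle degree $d$) yields $h(t) = h(d-t)$. The Koszul resolution gives $g(z) = \prod_{i=1}^n(1 - z^{d_i+1})/(1-z)^{n+1}$; the factorization $(1 - z^{d_0+1}) g(z) = h(z)$ translates to the recursion $g(t) = h(t) + g(t - d_0 - 1)$ and, using the symmetry of $h$, to $g(t) + g(d - t) = h(t) + (d_1+1)\dotsm(d_n+1)$.

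The apolarity lemma gives $\ccI \subseteq F^\perp$, hence the lower bound $H_\ccI(t) \geq h(t)$. I then plan to derive the symmetric identity
\[
H_\ccI(t) + H_\ccI(d-t) = h(t) + r
\]
by analyzing the factorization of the catalecticant $S_t \to T_{d-t}$, $D \mapsto D \cdot F$, as $\mathrm{ev}_t\colon S_t \to \C^r$, $D \mapsto (D(\ell_i))$, followed by the interpolation $\iota_{d-t}\colon \C^r \to T_{d-t}$, $(c_i) \mapsto \sum c_i \ell_i^{d-t}$, whose image has dimension $h(t)$; together with the description $(\ccI^{-1})_s = \mathrm{span}\{\ell_i^s\}$ and a dimension chase this pins down the intersection $V_t \cap \ker\iota_{d-t}$. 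Minimality of the Waring decomposition (which forces the $\ell_i^d$ to be linearly independent) is the essential input to guarantee the inclusion $\ker \iota_{d-t} \subseteq \mathrm{image}\,\mathrm{ev}_t$ needed for equality.

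For the complementary upper bound $H_\ccI(t) \leq g(t)$, I aim to exhibit $n$ forms $f_1, \dotsc, f_n \in \ccI$ of degrees $d_1+1, \dotsc, d_n+1$ forming a regular sequence; then $(f_1, \dotsc, f_n) \subseteq \ccI$ is a complete intersection yielding the desired bound. Their existence would follow from lower bounds on $\dim \ccI_{d_i+1}$ and a stepwise genericity argument keeping each successive common zero locus at the expected codimension. Combining $h(t) \leq H_\ccI(t) \leq g(t)$, the two symmetric identities, and the Section~\ref{sec: explicit expression} upper bound $r \leq (d_1+1)\dotsm(d_n+1)$ forces equality $H_\ccI(t) = g(t)$ in every degree and $r = (d_1+1)\dotsm(d_n+1)$, yielding Theorem~\ref{thm_rank_of_monomials} as a byproduct.

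The main technical obstacle is constructing the regular sequence inside $\ccI$ of the prescribed degrees without circular reference to Theorem~\ref{thm_decomposition_is_a_complete_intersection}: one cannot directly read off the generators from the conclusion one is trying to prove. The expectation is that this is handled by careful lower bounds on $\dim \ccI_{d_i+1}$ (using that $H_\ccI \leq r \leq (d_1+1)\dotsm(d_n+1)$) combined with a genericity argument ensuring the resulting partial intersections are proper at each step.
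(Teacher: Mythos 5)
Your strategy is genuinely different from the paper's. The paper works entirely with the monomial structure of $F^{\perp}$, intersecting $\ccI$ with $(\alpha_0)$ and bounding the resulting dimensions $q_t = \dim\bigl(\ccI_t\cap\alpha_0\cdot S_{t-1}\bigr)$ via an inclusion--exclusion count and the linear-algebra lemma, then bootstrapping through $\ccI:\alpha_0=\ccI$. You instead propose a sandwich $h\le H_\ccI\le g$ together with a Gorenstein-type reflection identity. This is an attractive framework, but as written it has three genuine gaps.

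First, the identity $H_\ccI(t)+H_\ccI(d-t)=h(t)+r$ is not justified. From the factorization of the catalecticant you get for free only $h(t) \ge H_\ccI(t)+H_\ccI(d-t)-r$; equality requires the inclusion $\ker\iota_{d-t}\subseteq\mathrm{image}\,\mathrm{ev}_t$, equivalently that the standard pairing $\ker\iota_t\times\ker\iota_{d-t}\to\C$ vanishes identically. Linear independence of the $\ell_i^d$ (which is what minimality directly yields) only controls the $t=0$ case, where $\ker\iota_d=0$; it says nothing about the pairing of $\ker\iota_t$ and $\ker\iota_{d-t}$ for $0<t<d$, where both kernels can be nonzero (e.g.\ for $(xyz)^2$ at $t=3$). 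That inclusion is a real statement about the decomposition, not a formal consequence of minimality, and would need a proof.

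Second, and more seriously, even granting both the reflection identity and the upper bound $H_\ccI\le g$, the conclusion does not follow. The four ingredients $h\le H_\ccI\le g$, $H_\ccI(t)+H_\ccI(d-t)=h(t)+r$, $g(t)+g(d-t)=h(t)+(d_1+1)\dotsm(d_n+1)$, and $r\le(d_1+1)\dotsm(d_n+1)$ are mutually consistent with $r<(d_1+1)\dotsm(d_n+1)$: subtracting the two reflection identities gives $(g(t)-H_\ccI(t))+(g(d-t)-H_\ccI(d-t))=(d_1+1)\dotsm(d_n+1)-r\ge 0$, a sum of two nonnegative quantities equal to a nonnegative constant, with no contradiction. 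To force equality you would need the complementary lower bound $r\ge(d_1+1)\dotsm(d_n+1)$ as an independent input, but this is precisely Theorem~\ref{thm_rank_of_monomials}, which you say you intend to recover as a byproduct. So the argument as outlined is circular: it proves $H_\ccI=g$ only if one already knows $r=(d_1+1)\dotsm(d_n+1)$. Closing this requires using more than the dimension inequalities --- for instance the fact that $\ccI$ is radical, which rules out $\ccI$ containing $\alpha_0^{d_0+1}$ or the other pure-power generators of $F^\perp$. The paper makes crucial use of precisely this radicality in Lemma~\ref{lem: bound on qt} and Lemma~\ref{lem: I colon alpha_0 equals I}, which is one reason its route avoids the circularity.

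Third, the upper bound $H_\ccI\le g$ via a regular sequence of degrees $d_1+1,\dotsc,d_n+1$ inside $\ccI$ is close in content to Theorem~\ref{thm_decomposition_is_a_complete_intersection} itself, and you acknowledge this as the main obstacle. The "lower bounds on $\dim\ccI_{d_i+1}$" you invoke would come from exactly the Hilbert function information you are trying to establish. This is where the paper's approach has its real advantage: by working with $\ccI\cap(\alpha_0)$ and the telescoping estimate in Lemma~\ref{lem: bound on qt}, it never needs to exhibit a regular sequence in $\ccI$ at all.

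In short: the framework is interesting and the reflection identity is plausible (and holds in examples), but the two key lemmas are unproven, and even with them the combinatorics of the inequalities does not deliver the conclusion without an independent lower bound on $r$. The paper's route, while less conceptual-looking, is self-contained precisely because it does not rely on the reflection identity.
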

The difference between our treatment  of the proposition
and the proof of Theorem~\ref{thm_rank_of_monomials} in \cite{Carlini_Catalisano_Geramita}
is that we compare the Hilbert functions of $\ccI$ and $\ccI \cap (\alpha_0)$,
whereas \cite{Carlini_Catalisano_Geramita} compare $\ccI$ and $\ccI:\alpha_0 + (\alpha_0)$,
which leads to a remarkably quick proof of Theorem~\ref{thm_rank_of_monomials}.
(An earlier version of \cite{Carlini_Catalisano_Geramita} compared $\ccI$ and $\ccI + (\alpha_0)$.)

The remainder of this section gives a proof of this proposition, via some lemmas.
First, a linear algebra lemma will be helpful.
\begin{lem}\label{codim_linear_algebra_fact}
If $A$, $B$, and $C$ are finite dimensional vector spaces such that
\[
\begin{matrix}
  A   & \supset & A\cap B  \\
\cap  &         &   \cap   \\
  C   & \supset &     B    \\
\end{matrix}
\]
then
\[
\dim (A \cap B) \ge \dim A +\dim B -\dim C.
\]
\end{lem}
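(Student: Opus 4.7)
The plan is to reduce to the standard dimension formula for the sum of two subspaces. Since $A$ and $B$ are both contained in $C$, their sum $A+B$ is also a subspace of $C$, so $\dim(A+B) \leq \dim C$ by monotonicity of dimension under inclusion. Combined with the Grassmann identity
\[
  \dim(A+B) + \dim(A \cap B) = \dim A + \dim B,
\]
which holds for any two subspaces of a common ambient space (here one can take the ambient space to be $C$, inside which both $A+B$ and $A \cap B$ are well-defined), rearrangement yields
\[
  \dim(A \cap B) = \dim A + \dim B - \dim(A+B) \geq \dim A + \dim B - \dim C,
\]
as claimed.

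There is essentially no obstacle: the two ingredients, namely the Grassmann formula and the inequality $\dim(A+B) \leq \dim C$, are completely elementary, and the hypothesis $A, B \subseteq C$ is used only to guarantee the latter inequality. The equality $\dim(A \cap B) = \dim A + \dim B - \dim C$ holds precisely when $A + B = C$, which gives a convenient sharpness criterion that will likely be useful in the applications of the lemma later in the paper.
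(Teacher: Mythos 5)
Your proof is correct. The paper itself gives no proof of this lemma, treating it as a standard linear algebra fact, so there is no argument to compare against; your route via the Grassmann identity $\dim(A+B) + \dim(A\cap B) = \dim A + \dim B$ together with $A+B\subseteq C$ is exactly the intended (and essentially only) elementary proof. Your observation that equality holds precisely when $A+B=C$ is a nice bonus, though the paper does not need it.
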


We denote
\[
  q_t = \dim \Big( \ccI_t      \cap (\alpha_0 \cdot S_{t-1})    \Big).
\]

\begin{lem}\label{lem: bound on qt}
With notation as above, for all $t \in \Z$, $q_t \leq \dim \ccJ_{t-1}.$
Moreover, if for some $t$ we have $q_t < \dim \ccJ_{t-1}$,
   then $q_{t+k d_0} < \dim \ccJ_{t+k d_0-1}$ for all $k > 0$.
\end{lem}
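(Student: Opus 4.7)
The plan is to express both $q_t$ and $\dim\ccJ_{t-1}$ as partial sums of dimensions living in the quotient ring $\bar S := S/(\alpha_0) \cong \C[\alpha_1,\dotsc,\alpha_n]$, and then compare termwise using apolarity.

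First I would rewrite $q_t = \dim(\ccI:\alpha_0)_{t-1}$: multiplication by $\alpha_0$ is injective on $S$ and identifies $(\ccI:\alpha_0)_{t-1}$ with $\ccI_t \cap \alpha_0 S_{t-1}$. Because $\ccI$ is the radical (saturated) defining ideal of a finite reduced set of points in $\PP^n$, the colon $I := \ccI:\alpha_0$ is exactly the ideal of the subset of those points at which $\alpha_0 \neq 0$, and hence $I:\alpha_0 = I$. Writing $\bar I$ for the image of $I$ in $\bar S$, the short exact sequence $0 \to I \cap (\alpha_0) \to I \to \bar I \to 0$ together with $I \cap (\alpha_0) = \alpha_0 (I:\alpha_0) = \alpha_0 I$ gives the recursion $\dim I_s = \dim I_{s-1} + \dim \bar I_s$ in each degree. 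Telescoping (and noting that $\alpha_0 \notin \ccI$, which follows from $d_0 \geq 1$) yields
\[
  q_t \;=\; \dim I_{t-1} \;=\; \sum_{j=0}^{t-1} \dim \bar I_j.
\]

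Next I would put $\dim\ccJ_{t-1}$ in the same form. The Hilbert series of the complete intersection $S/\ccJ$ is
\[
  \frac{\prod_{i=1}^n (1-z^{d_i+1})}{(1-z)^{n+1}} \;=\; \frac{1}{1-z}\,\prod_{i=1}^n \bigl(1 + z + \dotsb + z^{d_i}\bigr),
\]
and the second factor is the Hilbert series of $\bar S/\bar{F^\perp}$, where $\bar{F^\perp} := (\alpha_1^{d_1+1},\dotsc,\alpha_n^{d_n+1})$ is the image of $F^\perp$ in $\bar S$. Multiplication by $1/(1-z)$ encodes partial summation of coefficients, so
\[
  \dim\ccJ_{t-1} \;=\; \sum_{j=0}^{t-1}\dim \bar{F^\perp}_j.
\]

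The essential step is the termwise bound $\dim \bar I_j \leq \dim\bar{F^\perp}_j$ for every $j$. By apolarity $\ccI \subseteq F^\perp$, hence $I \subseteq F^\perp:\alpha_0$. A direct computation using $\alpha_0 \cdot F = \partial F/\partial x_0 = d_0\, x_0^{d_0-1}x_1^{d_1}\dotsm x_n^{d_n}$ yields $F^\perp:\alpha_0 = (\alpha_0^{d_0},\alpha_1^{d_1+1},\dotsc,\alpha_n^{d_n+1})$. Reducing modulo $\alpha_0$ kills the generator $\alpha_0^{d_0}$ (since $d_0 \geq 1$) and leaves exactly $\bar{F^\perp}$, so $\bar I \subseteq \bar{F^\perp}$ in $\bar S$ and the termwise bound follows. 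Summing over $j = 0,\dotsc,t-1$ gives $q_t \leq \dim\ccJ_{t-1}$. For the second claim, if strict inequality holds at some $t$ then each additional summand $\dim\bar{F^\perp}_j - \dim \bar I_j$ for $j \geq t$ is nonnegative, so strictness propagates and $q_{t+s} < \dim\ccJ_{t+s-1}$ for every $s \geq 1$, in particular for $s = k d_0$ with $k \geq 1$. The main --- and really the only non-routine --- obstacle is spotting the identification $\overline{(F^\perp:\alpha_0)} = \bar{F^\perp}$, which is precisely what bridges the $\alpha_0$-centered quantity $q_t$ with the Hilbert function of $\ccJ$; once this is noticed, the rest is standard bookkeeping with short exact sequences and Hilbert series.
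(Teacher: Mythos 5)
Your proof is correct, but it takes a genuinely different route from the paper's. The paper first computes $\dim\bigl((F^\perp)_t \cap \alpha_0 S_{t-1}\bigr)$ by inclusion--exclusion (or via two short exact sequences involving $\ccJ$), then uses the four-space linear-algebra Lemma~\ref{codim_linear_algebra_fact} to bound $q_t - q_{t-d_0}$ by the corresponding difference of Hilbert functions of $\ccJ$, and telescopes in steps of size $d_0$. You instead pass to $\bar S = S/(\alpha_0)$ and work with the colon ideal $I = \ccI : \alpha_0$, which is the ideal of the (nonempty, as $\alpha_0 \notin F^\perp$) subset of points off $V(\alpha_0)$ and hence satisfies $I : \alpha_0 = I$; this gives a unit-step telescope $q_t = \dim I_{t-1} = \sum_{j<t}\dim \bar I_j$, and the termwise bound $\dim\bar I_j \le \dim\bar{F^\perp}_j$ falls out of $\ccI \subseteq F^\perp$ via the clean observation $\overline{F^\perp : \alpha_0} = \bar{F^\perp}$. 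This avoids both the inclusion--exclusion computation and Lemma~\ref{codim_linear_algebra_fact}, and yields a slightly stronger ``moreover'': strictness propagates to \emph{every} $t' > t$, not merely $t' = t + k d_0$. Note that the paper explicitly contrasts its method (comparing $\ccI$ with $\ccI \cap (\alpha_0)$) against that of Carlini--Catalisano--Geramita (comparing $\ccI$ with $\ccI : \alpha_0 + (\alpha_0)$); since your $\bar I$ is exactly $\bigl((\ccI:\alpha_0)+(\alpha_0)\bigr)/(\alpha_0)$, your argument is in the spirit of the latter. The tradeoff is that the paper's Lemma~\ref{codim_linear_algebra_fact} is reused in Lemma~\ref{lem: dimension of I vs q_t}, keeping the machinery uniform across the section, whereas your route front-loads the geometric fact that $\ccI : \alpha_0$ is the ideal of the points off $\{\alpha_0 = 0\}$, a fact the paper only records later (in the corollary after Lemma~\ref{lem: I colon alpha_0 equals I}).
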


\begin{proof}
For convenience, let $(b_0,b_1,b_2,\dotsc,b_n) = (d_0,d_1+1,d_2+1,\dotsc,d_n+1)$.
Note that
\[
  F^\perp \cap (\alpha_0) = \alpha_0 \cdot (\alpha_0^{b_0},\alpha_1^{b_1},\dotsc,\alpha_n^{b_n}).
\]
It is convenient also to write
\[
  (F^\perp)_t \cap \alpha_0 \cdot S_{t-1} = \alpha_0 \cdot ( \alpha_0^{b_0} S_{t-1-b_0} + \dotsb + \alpha_n^{b_n} S_{t-1-b_n} ) .
\]
Then by inclusion-exclusion we may write the dimension of this space as
\[
  \dim\bigl((F^\perp)_t \cap \alpha_0 \cdot S_{t-1}\bigr) = \sum_{j=1}^{n+1} (-1)^{j-1} \sum_{0 \leq i_1 < \dotsb < i_j \leq n} \dim S_{t-1-b_{i_1} - \dotsb - b_{i_j}} .
\]
Separating terms which omit or include $b_0$,
\[
\begin{split}
  \dim((F^\perp)_t \cap \alpha_0 \cdot S_{t-1})
    &= \left[ \sum_{j=1}^{n} (-1)^{j-1} \sum_{1 \leq i_1 < \dotsb < i_j \leq n} \dim S_{t-1-b_{i_1} - \dotsb - b_{i_j}} \right] \\
    & \quad + \left[ \sum_{j=1}^{n+1} (-1)^{j-1} \sum_{0 = i_1 < \dotsb < i_j \leq n} \dim S_{t-1-b_{i_1} - \dotsb - b_{i_j}} \right] \\
    &= \left[ \sum_{j=1}^{n} (-1)^{j-1} \sum_{1 \leq i_1 < \dotsb < i_j \leq n} \dim S_{t-1-b_{i_1} - \dotsb - b_{i_j}} \right] \\
    & \quad - \left[ \sum_{j=0}^{n} (-1)^{j-1} \sum_{1 \leq i_1 < \dotsb < i_j \leq n} \dim S_{t-b_0-1-b_{i_1} - \dotsb - b_{i_j}} \right] \\
    &= \dim \ccJ_{t-1} - \dim \ccJ_{t-b_0-1} + \dim S_{t-b_0-1} .
\end{split}
\]
Thus
\begin{equation}\label{eq: dimension difference}
  \dim((F^\perp)_t \cap \alpha_0 \cdot S_{t-1}) - \dim S_{t-d_0-1} = \dim \ccJ_{t-1} - \dim \ccJ_{t-d_0-1} .
\end{equation}
We apply this to the diagram below:
%\begin{equation}%\label{diagram_with_respect_to_alpha}
\[
\begin{matrix}
\ccI_t \cap \alpha_0 \cdot S_{t-1} & \supset &
\ccI_t \cap ({\alpha_0}^{d_0+1} \cdot S_{t-d_0-1})\\
\cap &&\cap\\
(F^{\perp})_t \cap  (\alpha_0 \cdot S_{t-1}) & \supset &
{\alpha_0}^{d_0+1} \cdot S_{t-d_0-1}. \\
\end{matrix}
\]
%\end{equation}
By Lemma \ref{codim_linear_algebra_fact},
\[
  q_t - \dim(\ccI_t \cap (\alpha_0^{d_0+1} \cdot S_{t-d_0-1})) \leq \dim \ccJ_{t-1} - \dim \ccJ_{t-d_0-1} .
\]
Note that, since $\ccI$ is radical,
\[
  \ccI_t \cap (\alpha_0^{d_0+1} \cdot S_{t-d_0-1}) = \alpha_0^{d_0} \cdot ( \ccI_{t-d_0} \cap (\alpha_0 \cdot S_{t-d_0-1}) ) ,
\]
which has dimension $q_{t-d_0}$, giving
\begin{equation}\label{eq: q_t difference}
  q_t - q_{t-d_0} \leq \dim \ccJ_{t-1} - \dim \ccJ_{t-d_0-1} .
\end{equation}
Now for any $t$,
\[
\begin{split}
  q_t &= (q_t - q_{t-d_0}) + (q_{t-d_0} - q_{t-2d_0}) + \dotsb \\
    &\leq (\dim \ccJ_{t-1} - \dim \ccJ_{t-d_0-1}) + (\dim \ccJ_{t-d_0-1} - \dim \ccJ_{t-2d_0-1}) + \dotsb \\
    &= \dim \ccJ_{t-1},
\end{split}
\]
as claimed.

If for some $t$ we have $q_t< \dim \ccJ_{t-1}$, then by \eqref{eq: q_t difference} with $t$ replaced by $t+d_0$, we get
   $q_{t+d_0} \leq \dim \ccJ_{t+d_0-1} - \dim \ccJ_{t-1} + q_t < \dim \ccJ_{t+d_0-1}$.
Inductively we obtain the second claim of the lemma.
\end{proof}
We may avoid the inclusion-exclusion and alternating sums in the above proof with the following argument.

\begin{proof}[Alternative proof of Equation \eqref{eq: dimension difference}]
As above, suppose $F = x_0^{d_0} \dotsm x_n^{d_n}$ so that
$F^\perp = (\alpha_0^{d_0+1},\dotsc,\alpha_n^{d_n+1})$.

Let $\ccJ = (\alpha_1^{d_1+1},\dotsc,\alpha_n^{d_n+1})$, so $\ccJ$ is a complete intersection ideal generated in degrees $d_1+1, \dotsc, d_n+1$.

Note that
\[
  F^\perp \cap (\alpha_0) = \alpha_0 \cdot (\alpha_0^{d_0},\alpha_1^{d_1+1},\dotsc,\alpha_n^{d_n+1}) = \alpha_0 \cdot ( \ccJ + (\alpha_0^{d_0}) ) .
\]
Consider the graded $S$-module $\ccJ/\alpha_0^{d_0}\ccJ$.
We write two short exact sequences of graded $S$-modules as follows:
\begin{gather*}
  0 \to \ccJ(-d_0) \xrightarrow{\alpha_0^{d_0}} \ccJ \to \frac{\ccJ}{\alpha_0^{d_0}\ccJ} \to 0 , \\
  0 \to S(-d_0) \xrightarrow{\alpha_0^{d_0}} \ccJ+(\alpha_0^{d_0}) \to \frac{\ccJ}{\alpha_0^{d_0}\ccJ} \to 0 .
\end{gather*}
One may check easily that these are exact (for the second one this is a consequence of $(\alpha_0^{d_0}) \cap \ccJ = \alpha_0^{d_0} \ccJ$).
Counting dimensions of the $(t-1)$-th graded piece,
\[
  \dim \ccJ_{t-1} - \dim \ccJ_{t-d_0-1} = \dim \left(\frac{\ccJ}{\alpha_0^{d_0}\ccJ}\right)_{t-1} = \dim (\ccJ+(\alpha_0^{d_0}))_{t-1} - \dim S_{t-d_0-1} ,
\]
which recovers \eqref{eq: dimension difference} as above and the rest of the proof follows as before.
\end{proof}

\begin{lem}\label{lem: dimension of I vs q_t}
   With notation as above, for all integers $t$ we have
   \[
      \dim (S_{t}/ \ccI_t ) \ge \dim S_{t-1} - q_t.
   \]
\end{lem}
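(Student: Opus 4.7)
The plan is to produce a subspace of $S_t/\ccI_t$ of dimension $\dim S_{t-1} - q_t$ by sending $S_{t-1}$ into $S_t$ through multiplication by $\alpha_0$ and then projecting to the quotient. Concretely, I would consider the composition
\[
\mu \colon S_{t-1} \xrightarrow{\alpha_0 \cdot} S_t \twoheadrightarrow S_t/\ccI_t,
\]
whose image is a subspace of $S_t/\ccI_t$. Hence $\dim(S_t/\ccI_t) \geq \dim \operatorname{im}\mu = \dim S_{t-1} - \dim \ker \mu$, and the inequality will follow as soon as I identify $\ker \mu$ with a space of dimension $q_t$.

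An element $g \in S_{t-1}$ lies in $\ker \mu$ if and only if $\alpha_0 g \in \ccI_t$. Because $S = \C[\alpha_0,\dotsc,\alpha_n]$ is a polynomial ring, $\alpha_0$ is a non-zero-divisor, so multiplication by $\alpha_0$ gives a linear isomorphism $S_{t-1} \xrightarrow{\sim} \alpha_0 \cdot S_{t-1}$. Under this isomorphism, $\ker \mu$ maps bijectively onto $\ccI_t \cap (\alpha_0 \cdot S_{t-1})$, which by definition has dimension $q_t$. Therefore $\dim \ker \mu = q_t$ and $\dim \operatorname{im} \mu = \dim S_{t-1} - q_t$, yielding the claim.

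The argument amounts to a single application of rank--nullity; I do not anticipate any real obstacle. The one point worth noting is that we only use that $\alpha_0$ is a non-zero-divisor in $S$, which is automatic since $S$ is an integral domain. In particular, no hypothesis about whether $\alpha_0$ vanishes at any of the points $[\ell_i]$ is needed.
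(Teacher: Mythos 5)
Your proof is correct and is essentially the same argument as the paper's: the paper invokes its Lemma~\ref{codim_linear_algebra_fact} on the square $\alpha_0 S_{t-1} \supset \ccI_t \cap \alpha_0 S_{t-1}$, $S_t \supset \ccI_t$, whereas you inline that linear algebra by applying rank--nullity to the composite $S_{t-1}\xrightarrow{\alpha_0\cdot} S_t \twoheadrightarrow S_t/\ccI_t$; both hinge on the same two points, namely that $\alpha_0$ is a non-zero-divisor (so $\dim \alpha_0 S_{t-1} = \dim S_{t-1}$) and that the kernel/intersection is exactly the space of dimension $q_t$.
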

\begin{proof}
   Consider:
   \[
     \begin{matrix}
       \alpha_0 S_{t-1} & \supset & \ccI_t \cap \alpha_0 \cdot S_{t-1}\\
       \cap             &         &\cap\\
       S_{t}            & \supset & \ccI_t.
     \end{matrix}
   \]
   By Lemma~\ref{codim_linear_algebra_fact},
   \[
     \dim (S_{t}/ \ccI_t ) = \dim S_t - \dim \ccI_t \geq \dim S_{t-1} - \dim (\ccI_t \cap \alpha_0 \cdot S_{t-1} ) = \dim S_{t-1} - q_t,
   \]
   which proves the lemma.
\end{proof}

From the two lemmas, we recover the asymptotic version of Proposition~\ref{prop: Hilbert function agrees}.
That is, for sufficiently large $t$, we have:
%   \begin{equation}\label{eq: asymptotic bound}
%     \begin{split}
%        R(F) = r = \dim (S_{t}/ \ccI_t )
%        &\stackrel{\text{by Lem.~\ref{lem: dimension of I vs q_t}}}{\ge} \dim S_{t-1} - q_t \\
%        &\stackrel{\text{by Lem.~\ref{lem: bound on qt}}}{\ge} \dim S_{t-1} - \dim \ccJ_{t-1} \\
%        & = (d_1+1)\dotsm(d_n+1).
%     \end{split}
%   \end{equation}
\begin{align}\label{eq: asymptotic bound}
        R(F) = r &= \dim (S_{t}/ \ccI_t )        && \text{for $t \gg 0$}  \notag \\
        &\ge \dim S_{t-1} - q_t                      && \text{by Lemma~\ref{lem: dimension of I vs q_t}} \\
        &\ge \dim S_{t-1} - \dim \ccJ_{t-1}  && \text{by Lemma~\ref{lem: bound on qt}}  \notag \\
        &= (d_1+1)\dotsm(d_n+1)              && \text{for $t \gg 0$}. \notag
\end{align}
Since $r \le (d_1+1)\dotsm(d_n+1)$ by the explicit expression in Section~\ref{sec: explicit expression},
   all inequalities must be equalities.
This gives an alternative proof of Theorem~\ref{thm_rank_of_monomials}.
In particular, for sufficiently large $t$:
\begin{equation}\label{eq: asymptotic I colon alpha_0 equals I}
   \dim S_t - \dim \ccI_t =  \dim S_{t+1} - \dim \ccI_{t+1} = \dim S_{t} - q_{t+1}, \text{ i.e., } \dim \ccI_t = q_{t+1}.
\end{equation}

\begin{lem}\label{lem: I colon alpha_0 equals I}
  We have $\ccI:\alpha_0 = \ccI$.
\end{lem}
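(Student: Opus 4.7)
The plan is to bootstrap the asymptotic identity \eqref{eq: asymptotic I colon alpha_0 equals I} into the full ideal-theoretic equality $\ccI : \alpha_0 = \ccI$, using that $\ccI$ is saturated.

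First I would translate $q_t$ into a statement about the colon ideal. Since $\alpha_0$ is a nonzerodivisor on $S$, multiplication by $\alpha_0$ gives a vector space isomorphism
\[
  (\ccI : \alpha_0)_{t-1} \xrightarrow{\ \alpha_0\cdot\ } \ccI_t \cap \alpha_0 \cdot S_{t-1},
\]
simply because $\alpha_0 h \in \ccI$ if and only if $h \in \ccI : \alpha_0$. Consequently $\dim (\ccI:\alpha_0)_{t-1} = q_t$ for every $t$.

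Next I would plug this into \eqref{eq: asymptotic I colon alpha_0 equals I}, which asserts that $\dim \ccI_t = q_{t+1}$ for all $t \gg 0$. Combining this with the identification above gives $\dim \ccI_t = \dim (\ccI:\alpha_0)_t$ for all $t \gg 0$. Since the inclusion $\ccI \subseteq \ccI:\alpha_0$ holds tautologically, this forces $\ccI_t = (\ccI:\alpha_0)_t$ in all sufficiently large degrees.

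Finally I would upgrade this high-degree equality to full equality using that $\ccI$, being the defining ideal of a reduced set of points, is saturated. Given any homogeneous $g \in \ccI:\alpha_0$, for every $i$ and every $N \gg 0$ the element $\alpha_i^N g$ lies in $(\ccI:\alpha_0)_{\deg g + N} = \ccI_{\deg g + N}$, and saturation of $\ccI$ then gives $g \in \ccI$. This yields $\ccI : \alpha_0 \subseteq \ccI$, completing the proof. The only step that could be a mild obstacle is the very first one, where one must be sure to use that $\alpha_0$ is a nonzerodivisor on $S$ (rather than on $S/\ccI$, which is the conclusion we want), but this is immediate since $S$ is a domain.
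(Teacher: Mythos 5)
Your proof is correct and takes essentially the same route as the paper: multiplication by $\alpha_0$ identifies $(\ccI:\alpha_0)_{t-1}$ with $\ccI_t\cap\alpha_0 S_{t-1}$, the identity $\dim\ccI_t=q_{t+1}$ for $t\gg 0$ forces $\ccI_t=(\ccI:\alpha_0)_t$ in high degrees, and then one upgrades to all degrees. The paper closes either by noting that both $\ccI$ and $\ccI:\alpha_0$ are saturated (so agreement in high degrees is agreement everywhere), or, in an explicit variant, by using that $\ccI$ is radical (taking $\beta\in\ccI:\alpha_0$, showing $\beta^N\in\ccI$, and concluding $\beta\in\ccI$); your closing step multiplies by powers $\alpha_i^N$ and invokes saturation of $\ccI$ alone, which is the same idea in slightly different clothing.
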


\begin{proof}
Multiplication by $\alpha_0$ gives a one-to-one map $\ccI_t \to \ccI_{t+1} \cap \alpha_0 \cdot S_t$.
For $t \gg 0$, since $\dim \ccI_t = q_{t+1}$, this multiplication map is onto, hence $(\ccI: \alpha_0)_t = \ccI_t$ in sufficiently high degree.
Thus $\ccI$ and $\ccI:\alpha_0$ agree up to saturation.
But both ideals are saturated, so $\ccI = \ccI:\alpha_0$.

Explicitly, let $\beta \in (\ccI:\alpha_0)_t$.
Then $\beta^N \in (\ccI:\alpha_0)_{tN}$ and $\alpha_0 \beta^N \in \ccI_{tN+1}$.
For $N \gg 0$, $\dim \ccI_{tN} = \dim (\ccI \cap (\alpha_0))_{tN+1}$, so $\beta^N \in \ccI_{tN}$, and since $\ccI$ is radical, $\beta \in \ccI$.
\end{proof}

\begin{lem}\label{lem: dim I_t equals q_{t+1}}
   For all $t$, we have $\dim \ccI_t = q_{t+1}$.
\end{lem}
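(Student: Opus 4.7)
The plan is to use Lemma~\ref{lem: I colon alpha_0 equals I} to promote the asymptotic equality in \eqref{eq: asymptotic I colon alpha_0 equals I} to an equality in every degree. The key observation is that multiplication by $\alpha_0$ gives a degree-shifting $S$-linear map
\[
  \mu_{\alpha_0} \colon \ccI_t \longrightarrow \ccI_{t+1} \cap \alpha_0 \cdot S_t, \qquad \beta \mapsto \alpha_0 \beta.
\]
This map is well-defined because $\ccI$ is an ideal, and it is injective because $S$ is an integral domain (so $\alpha_0$ is a non-zero-divisor).

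The only thing to check is surjectivity. Suppose $\alpha_0 \gamma \in \ccI_{t+1}$ for some $\gamma \in S_t$. Then by definition $\gamma \in (\ccI:\alpha_0)_t$. But Lemma~\ref{lem: I colon alpha_0 equals I} asserts that $\ccI:\alpha_0 = \ccI$, so $\gamma \in \ccI_t$, and hence $\alpha_0 \gamma = \mu_{\alpha_0}(\gamma)$ is in the image. So $\mu_{\alpha_0}$ is an isomorphism of vector spaces, and the dimensions match:
\[
  \dim \ccI_t = \dim \bigl( \ccI_{t+1} \cap \alpha_0 \cdot S_t \bigr) = q_{t+1}.
\]

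There is really no main obstacle here: the lemma is a formal consequence of Lemma~\ref{lem: I colon alpha_0 equals I} together with the fact that $\alpha_0$ is a non-zero-divisor in $S$. All the substantive work (the computation that forces $\ccI:\alpha_0 = \ccI$) is already done in the previous lemmas; this lemma merely records the resulting dimension equality degree by degree, removing the "for $t \gg 0$" qualifier from \eqref{eq: asymptotic I colon alpha_0 equals I}.
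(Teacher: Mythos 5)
Your proof is correct and is essentially the same as the paper's: the paper's one-line argument invokes Lemma~\ref{lem: I colon alpha_0 equals I} to say multiplication by $\alpha_0$ gives a bijection $\ccI_t = (\ccI:\alpha_0)_t \to (\ccI \cap (\alpha_0))_{t+1}$, and you have simply spelled out the injectivity (nonzerodivisor) and surjectivity (colon ideal) checks.
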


\begin{proof}
Lemma \ref{lem: I colon alpha_0 equals I} shows that multiplication by $\alpha_0$
gives a bijection $\ccI_t = (\ccI:\alpha_0)_t \to (\ccI \cap (\alpha_0))_{t+1}$ in every degree $t$.
\end{proof}

\begin{proof}[Proof of Proposition~\ref{prop: Hilbert function agrees}]
   By Lemmas~\ref{lem: bound on qt} and \ref{lem: dim I_t equals q_{t+1}},
   \[
      \dim \ccI_t    = q_{t+1}  \le  \dim \ccJ_t.
   \]
   By the ``moreover'' part of Lemma~\ref{lem: bound on qt},
      if for some $t$ we have a strict inequality $\dim  \ccI_t < \dim \ccJ_t$,
      then for all nonnegative $k$,
         \[
            \dim (S_{t+k d_0}/ \ccI_{t+k d_0} ) > \dim S_{t+k d_0} - \dim \ccJ_{t+k d_0},
         \]
      a contradiction with \eqref{eq: asymptotic bound} for sufficiently large $k$.
\end{proof}

\begin{cor}
For $F = x_0^{d_0} \dotsm x_n^{d_n} = \ell_1^d + \dotsb + \ell_r^d$ with $d_0 \leq \dotsb \leq d_n$ and $r=R(F)$,
each of the linear forms $\ell_i$ has $x_0$ appearing with nonzero coefficient.
\end{cor}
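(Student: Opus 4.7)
The plan is to translate the claim into a statement about the ideal $\ccI$ and apply Lemma~\ref{lem: I colon alpha_0 equals I}. Writing $\ell_i = c_{i,0} x_0 + \dotsb + c_{i,n} x_n$, the point $[\ell_i] \in \PP^n$ has homogeneous coordinates $[c_{i,0} : \dotsb : c_{i,n}]$ in the basis dual to $\alpha_0, \dotsc, \alpha_n$, so (up to rescaling the representative) the coordinate function $\alpha_0 \in S_1$ evaluated at $[\ell_i]$ is $c_{i,0}$. Thus the assertion that each $\ell_i$ contains $x_0$ with nonzero coefficient is equivalent to the geometric assertion that $\alpha_0$ does not vanish at any point of $Q = \{[\ell_1], \dotsc, [\ell_r]\}$.

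Next I would translate this geometric condition on $\alpha_0$ into an algebraic one on the ideal $\ccI$. Since $\ccI$ is the saturated radical ideal of the reduced zero-dimensional scheme $Q$, it admits a primary decomposition $\ccI = \mathfrak{m}_1 \cap \dotsb \cap \mathfrak{m}_r$, where $\mathfrak{m}_i \subset S$ is the homogeneous maximal ideal associated with $[\ell_i]$, and these $\mathfrak{m}_i$ are precisely the associated primes of $\ccI$. A homogeneous element of $S$ is a non-zero-divisor on $S/\ccI$ if and only if it lies outside every $\mathfrak{m}_i$, i.e., if and only if it fails to vanish at every point of $Q$. In particular, $\alpha_0$ is a non-zero-divisor on $S/\ccI$ if and only if no $[\ell_i]$ lies on the hyperplane $\{\alpha_0 = 0\}$.

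To conclude, I would observe that the condition that multiplication by $\alpha_0$ is injective on $S/\ccI$ is exactly $\ccI : \alpha_0 = \ccI$, which is precisely what Lemma~\ref{lem: I colon alpha_0 equals I} has already established. Combining the two translations above, $\alpha_0$ does not vanish at any $[\ell_i]$, and hence each $\ell_i$ has $x_0$ appearing with a nonzero coefficient, as claimed. The real work lies in Lemma~\ref{lem: I colon alpha_0 equals I}; once that is in hand, the corollary is a short dictionary translation between the algebra of $\ccI$ and the geometry of the points $[\ell_i]$, with no further obstacles.
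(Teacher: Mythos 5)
Your argument is correct and follows essentially the same route as the paper: both proofs reduce to Lemma~\ref{lem: I colon alpha_0 equals I} and then translate $\ccI:\alpha_0 = \ccI$ into the statement that $\alpha_0$ does not vanish at any $[\ell_i]$ (the paper phrases the translation as ``$\ccI:\alpha_0$ is the defining ideal of the subset of $Q$ off the hyperplane $\{\alpha_0=0\}$,'' while you phrase it via associated primes and non-zero-divisors, but the content is identical). One small terminological slip: the ideals you call ``homogeneous maximal ideals'' $\mathfrak{m}_i$ are really the height-$n$ homogeneous primes defining the points $[\ell_i] \in \PP^n$, not maximal ideals of $S$; since $\ccI$ is saturated and radical these are exactly the associated primes of $S/\ccI$, so your zero-divisor criterion is still correct as applied.
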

\begin{proof}
The coordinate $\alpha_0$ on a point $[\ell] = [\alpha_0 x_0 + \dotsb + \alpha_n x_n] \in \PP T_1$ gives the coefficient of $x_0$ in the linear form $\ell$.
As above, let $\ccI$ be the defining ideal of the set of points $Q = \set{[\ell_1],\dotsc,[\ell_r]}$.
Then $\ccI:\alpha_0$ is the defining ideal of the subset of points in $Q$ not lying on the hyperplane $\{\alpha_0=0\}$.
By Lemma \ref{lem: I colon alpha_0 equals I}, $\ccI:\alpha_0 = \ccI$, so all the points $[\ell_i]$ 
  are not contained in this hyperplane, and hence have nonzero coefficients of $x_0$.
\end{proof}

\section{Complete intersections}

Fix $k \in \setfromto{0}{n}$, and $\bar \phi  = (\fromto{\phi_1}{\phi_k})$, with $\phi_i \in S_{d_i- d_0}$.
We define the following homogeneous ideal:
\[
  \ccI(k, \bar \phi) = ( \alpha_i^{d_i+1} - {\alpha_0}^{d_0+1} \phi_i \mid i \in \setfromto{1}{k} ).
\]
Every such ideal is a complete intersection ideal generated in degrees $\fromto{d_1+1}{d_k+1}$.

\begin{example}
The explicit expression in Section~\ref{sec: explicit expression} corresponds to
\[
   \bar\phi = (\fromto{\alpha_0^{d_1-d_0}}{\alpha_0^{d_n-d_0}}).
\]
\end{example}

The following Proposition proves Theorem~\ref{thm_decomposition_is_a_complete_intersection}.
\begin{prop}\label{prop:any_solution_is_ccI_n_bar_phi}
   Any ideal $\ccI$ as in Theorem~\ref{thm_decomposition_is_a_complete_intersection}
      is of the form $\ccI(n, \bar \phi)$, for some $\bar \phi  = (\fromto{\phi_1}{\phi_n})$.
\end{prop}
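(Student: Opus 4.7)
The plan is to proceed in two stages: first establish the ideal equality $\ccI + (\alpha_0^{d_0+1}) = F^\perp$, from which we extract the candidate generators $f_i = \alpha_i^{d_i+1} - \alpha_0^{d_0+1}\phi_i$; then verify that the ideal $(f_1, \dotsc, f_n)$ already exhausts $\ccI$.

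For the first stage, I would start by iterating Lemma~\ref{lem: I colon alpha_0 equals I} to get $\ccI : \alpha_0^{d_0+1} = \ccI$; thus multiplication by $\alpha_0^{d_0+1}$ is injective on $S/\ccI$ and gives a short exact sequence
\[
  0 \to (S/\ccI)(-d_0-1) \xrightarrow{\alpha_0^{d_0+1}} S/\ccI \to S/(\ccI + (\alpha_0^{d_0+1})) \to 0.
\]
The analogous sequence holds with $\ccJ$ replacing $\ccI$, because $\alpha_0$ is a non-zero-divisor modulo $\ccJ$. Noting that $\ccJ + (\alpha_0^{d_0+1}) = F^\perp$ and invoking Proposition~\ref{prop: Hilbert function agrees} to identify the Hilbert functions of $\ccI$ and $\ccJ$, a term-by-term comparison of the two sequences yields $\dim (S/(\ccI + (\alpha_0^{d_0+1})))_t = \dim (S/F^\perp)_t$ for every $t$. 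Together with the inclusion $\ccI + (\alpha_0^{d_0+1}) \subseteq F^\perp$ (from $\ccI \subseteq F^\perp$), this forces equality.

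For the second stage, the equality just proved gives $\alpha_i^{d_i+1} \in \ccI + (\alpha_0^{d_0+1})$, so there exist $f_i \in \ccI$ and $\phi_i \in S_{d_i - d_0}$ with $f_i = \alpha_i^{d_i+1} - \alpha_0^{d_0+1}\phi_i$. Setting $\ccI'' = (f_1, \dotsc, f_n) = \ccI(n, \bar\phi)$, we have $\ccI'' \subseteq \ccI$. To close the argument I would check that $\ccI''$ is a complete intersection: since $\ccI'' + (\alpha_0)$ equals $(\alpha_0, \alpha_1^{d_1+1}, \dotsc, \alpha_n^{d_n+1})$, whose zero locus is just the origin, it has codimension $n+1$; Krull's Hauptidealsatz then forces $\codim \ccI'' \geq n$, and since $\ccI''$ has $n$ generators equality must hold. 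The Hilbert series of $S/\ccI''$ therefore matches that of the complete intersection $\ccJ$, and hence (by Proposition~\ref{prop: Hilbert function agrees}) that of $S/\ccI$; combined with $\ccI'' \subseteq \ccI$ this forces $\ccI'' = \ccI$.

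The step I expect to require the most care is the ideal equality $\ccI + (\alpha_0^{d_0+1}) = F^\perp$ in the first stage, since it is where the geometric input (that $\ccI$ is an ideal of points satisfying $\ccI:\alpha_0 = \ccI$) couples to the apolar input (the explicit description of $F^\perp$) through Hilbert function bookkeeping. Once that is in hand, the second stage is essentially a routine complete-intersection-plus-Hilbert-series calculation.
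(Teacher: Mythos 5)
Your proof is correct, and it takes a genuinely different route from the paper. The paper proves the proposition by induction on degree $t$: it tracks the ideal $\ccI_{\le t}$ generated by elements of degree at most $t$, shows via the Hilbert function comparison (Proposition~\ref{prop: Hilbert function agrees}) that exactly the expected number of new generators appears at each degree, and then uses radicality of $\ccI$ to show a certain matrix of coefficients is invertible, allowing a change of basis to put the new generators into the required form $\alpha_i^{d_i+1} - \psi_i \alpha_0^{d_0+1}$. Your argument instead establishes the single global identity $\ccI + (\alpha_0^{d_0+1}) = F^\perp$, obtained by comparing the two short exact sequences coming from the nonzerodivisor $\alpha_0^{d_0+1}$ on $S/\ccI$ and $S/\ccJ$ (the former is justified by iterating Lemma~\ref{lem: I colon alpha_0 equals I}) and invoking Proposition~\ref{prop: Hilbert function agrees}. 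From that identity you read off the candidate generators $f_i$, show $(f_1,\dotsc,f_n)$ is a complete intersection of codimension $n$ by cutting with $\alpha_0$, and then conclude $(f_1,\dotsc,f_n) = \ccI$ by equality of Hilbert functions. This is a clean and conceptually satisfying alternative: it avoids the degree-by-degree bookkeeping and the matrix-invertibility step of the paper's proof, and isolates the key ideal identity $\ccI + (\alpha_0^{d_0+1}) = F^\perp$, which the paper only implicitly encodes. One small quibble: the inequality $\codim \ccI'' \geq n$ comes from the fact that adding the principal ideal $(\alpha_0)$ raises codimension by at most one (equivalently, lowers dimension by at most one), while Krull's principal ideal theorem gives the upper bound $\codim \ccI'' \leq n$ from $\ccI''$ having $n$ generators; your attribution has these roles of Krull slightly reversed, though the overall conclusion $\codim \ccI'' = n$ is of course correct.
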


\begin{proof}
   Let $\ccI_{\le t}$ denote the ideal generated by the homogeneous elements in $\ccI$ of degree at most $t$.
   We will prove by induction that $\ccI_{\le t} = \ccI(k, \bar \phi)$ for some $k=k(t)$ and $\bar \phi= \bar \phi(t)$.
   More precisely, we claim that $k(t)=\# \set{i :  d_i+1 \le t}$.
   For $t=0$, $\ccI_{\le 0}=\ccI(0, \emptyset) = 0$.
   Suppose $\ccI_{\le t-1} = \ccI(k, \bar \phi)$ for $k = \# \set{i :  d_i+1 \le t-1}$ and some $\bar \phi= \bar \phi(t-1)$.
   If $t\ne d_i+1$ for any $i \in \setfromto{1}{n}$, then
     \[
       \dim \ccI_t = \dim \ccJ_t = \dim (\ccI_{\le t-1})_{t}.
     \]
   The first equality follows from Proposition~\ref{prop: Hilbert function agrees}
      and the second from the fact that $\ccI(k, \bar \phi)$ is a complete intersection
      of the same degrees as $\ccJ$, up to degree $t$.
   So there is no new generator of $\ccI$ in degree $t$,
      and $\ccI_{\le t} = \ccI(k, \bar \phi)$.

   Now suppose $t =d_{k+1}+1 = \dotsb = d_l+1 < d_{l+1} + 1$ for some $l > k$.
   Then by the same argument, there must be exactly $l-k$
      new linearly independent generators $(\fromto{\rho_{k+1}}{\rho_l})$ of $\ccI$ in degree $t$.
   Each $\rho_i$ must be in $(F^{\perp})_t$.
   Using the generators of the lower degree, we may eliminate from $\rho_i$
      the summands divisible by $\alpha_j^{d_j+1}$ for each $1 \le j \le k$.
   That is, we can assume that each of the new generators is of the form
   \[
     \rho_i =  \sum_{j = k+1}^l c_{ij} {\alpha_j}^{d_j+1} -  \psi_i {\alpha_0}^{d_0+1}
   \]
   for some $\psi_i \in S_{t-d_0-1}$  and $c_{ij} \in \C$.
   We claim the matrix $C :=(c_{ij})_{i,j=k+1}^l$ is invertible.
   Suppose on contrary that there exists a linear combination $\rho$ of the $\rho_i$'s
      such that $\rho = \psi {\alpha_0}^{d_0+1}$ (possibly $\psi = 0$).
   Since $\ccI$ is radical, $\psi \alpha_0 \in \ccI_{t-d_0}$.
   But then $\rho \in \ccI_{\le t-1}$ and one of the generators $\rho_i$ is redundant, a contradiction.

   So $C$ is invertible, and by replacing $(\rho_{k+1},\dotsc,\rho_l)$ with the linear combinations
%     $C^{-1}
%       \begin{pmatrix}
%          \rho_{k+1}\\
%          \vdots\\
%          \rho_l
%       \end{pmatrix}$
$C^{-1}(\rho_{k+1},\dotsc,\rho_l)^t$
     (and analogously for $\psi_i$),
     we may assume:
   \[
     \rho_i = {\alpha_i}^{d_i+1} -  \psi_i {\alpha_0}^{d_0+1}.
   \]

   Set $\bar \phi'  = (\fromto{\phi_1}{\phi_k}, \fromto{\psi_{k+1}}{\psi_{l}})$.
   By the above, we have $\ccI_{\le t} = \ccI(l, \bar \phi')$.
\end{proof}

We have just seen that if $\ccI \subset F^\perp$ is a radical one-dimensional ideal, then $\ccI = \ccI(n,\bar\phi)$ for some $\bar\phi$.
One may ask which $\bar \phi = (\phi_1,\dotsc,\phi_n)$ can occur.
For any $\bar\phi$, $\ccI(n,\bar\phi)$ is a one-dimensional complete intersection ideal.
So the question is, for which $\bar\phi$ is $\ccI(n,\bar\phi)$ radical?
An obvious necessary condition is that each $\phi_i$ should not be divisible by $\alpha_i^2$.

In addition, if $\ccI = \ccI(n,\bar\phi)$ is radical, then each $\phi_i \notin \ccI(i-1,(\phi_1,\dotsc,\phi_{i-1}))$, the subideal generated by the first $i-1$ generators of $\ccI$.
Otherwise the generator $\alpha_i^{d_i+1} - \phi_i \alpha_0^{d_0+1}$ of $\ccI$
can be replaced with $\alpha_i^{d_i+1}$, so the ideal is not radical.
The following example shows that this necessary condition is not sufficient, even combined with $\alpha_i^2 \nmid \phi_i$.

\begin{example}
For $F = xy^2z^3$,
$\ccI$ must have the form $\ccI = (\beta^3-L\alpha^2,\gamma^4-Q\alpha^2)$ for some linear form $L=L(\alpha,\beta,\gamma)$ and quadratic form $Q=Q(\alpha,\beta,\gamma)$.
Consider the example $\ccI = (\beta^3 - \alpha^2 \gamma, \gamma^4 - \alpha^2 \beta^2)$.
Then $L = \gamma$ is not divisible by $\beta^2$, $Q = \beta^2$ is not divisible by $\gamma^2$, and $Q \notin (\beta^3 - \alpha^2 \gamma)$,
   nevertheless this ideal $\ccI$ is not radical.
One can check easily that $P = \alpha^4 \beta - \beta^2 \gamma^3 \notin \ccI$ but $P^2 \in \ccI$.
More concretely, $\ccI$ defines a scheme of length $2$ at $[1:0:0] \in \PP^2$.
\end{example}

\begin{prop}\label{prop:general_phi_bar_gives_radical}
   Let $\bar\phi$ be general, i.e., each $\phi_i \in S_{d_i-d_0}$ is general.
   Then $\ccI(n,\bar\phi)$ is radical.
\end{prop}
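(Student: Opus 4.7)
The plan is to prove that the subset
$U := \{\bar\phi \in V : \ccI(n,\bar\phi) \text{ is radical}\}$
of the parameter space $V := S_{d_1-d_0} \times \dotsb \times S_{d_n-d_0}$
is open and non-empty. Since $V$ is irreducible (a product of vector spaces), this will imply the proposition.

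First I would set up the family. For every $\bar\phi \in V$, the generators of $\ccI(n,\bar\phi)$ are $n$ forms of degrees $d_1+1,\dotsc,d_n+1$ in the polynomial ring $S$ of $n+1$ variables, and they form a regular sequence. Hence $S/\ccI(n,\bar\phi)$ is Cohen--Macaulay of Krull dimension $1$, cutting out a zero-dimensional subscheme of $\PP^n$ of length $N := (d_1+1)\dotsm(d_n+1)$ by B\'ezout. The induced family over $V$ is therefore flat with constant Hilbert polynomial.

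For openness, the family over $V$ gives a classifying morphism to $\Hilb_N(\PP^n)$, and the locus of reduced length-$N$ subschemes (i.e.\ configurations of $N$ distinct reduced points) is a Zariski-open subset. Pulling back shows $U$ is open in $V$; alternatively one argues directly via a relative discriminant / trace-form degeneration locus, or via the equivalence (for a Cohen--Macaulay zero-dimensional scheme) of reducedness with smoothness together with openness of smoothness in flat families.

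Finally, I would exhibit one member of $U$, namely $\bar\phi_0 = (\alpha_0^{d_1-d_0},\dotsc,\alpha_0^{d_n-d_0})$ from the preceding Example, corresponding to the explicit decomposition of Section \ref{sec: explicit expression}. For this choice the generators become $\alpha_i^{d_i+1} - \alpha_0^{d_i+1}$, with no solutions on the hyperplane $\{\alpha_0 = 0\}$. In the affine chart $\alpha_0 = 1$ the equations reduce to $\alpha_i^{d_i+1} = 1$, cutting out the $N$ distinct points $[1:\zeta_1^{a_1}:\dotsb:\zeta_n^{a_n}]$ with $\zeta_i$ a primitive $(d_i+1)$-th root of unity and $0 \leq a_i \leq d_i$; the Jacobian is diagonal with nonzero entries at each such point, so the scheme is reduced and $\ccI(n,\bar\phi_0)$ is radical. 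I expect the main obstacle to be the openness step: intuitively obvious, but it requires invoking the correct semicontinuity statement (e.g.\ via $\Hilb_N$) so that the non-reduced locus really is closed in this particular flat family. Once that is in place, the rest is bookkeeping.
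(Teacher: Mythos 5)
Your proof is correct and takes essentially the same approach as the paper: set up the flat family of complete intersections over the irreducible parameter space $\prod S_{d_i-d_0}$, use that the reduced locus is open in such a flat family, and exhibit one reduced fiber coming from the explicit decomposition of Section~\ref{sec: explicit expression}. The paper obtains flatness via miracle flatness (Cohen--Macaulay total space plus equidimensional fibers, citing Eisenbud) and is terser about the openness step and the choice of reduced fiber, both of which you spell out (via $\Hilb_N(\PP^n)$ and a Jacobian computation); the substance is the same.
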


\begin{proof}
   Let $\ccB:= \prod_{i =1}^{n} S_{d_i-d_0}$.
   In $\ccB \times \PP^n$ consider a variety $\ccQ$ defined by
     $\ccI(n,\bar\phi)$,
     where $\bar \phi  = (\fromto{\phi_1}{\phi_n})$,
     $\ccI(n, \bar \phi) = ( \alpha_i^{d_i+1} - {\alpha_0}^{d_0+1} \phi_i \mid i \in \setfromto{1}{n})$,
     and $\phi_i =  \sum_{|J| = d_i-d_0} f_J \alpha^J$, where $f_J$ are the coordinates on the $S_{d_i - d_0}$
     component of $\ccB$.
   Then $\ccQ$ is a complete intersection and each fiber $\ccQ_b$ of $\ccQ \to \ccB$ is a complete intersection.
   In particular, $\ccQ$ is Cohen-Macauley \cite[Prop.~18.13]{eisenbud}
     and the map $\ccQ \to \ccB$ is equidimensional, thus flat \cite[Thm~18.16]{eisenbud}.
   Some of the fibers are reduced, and since being reduced is an open property in a flat family,
     it follows that a general fiber is reduced.
\end{proof}

\section{Dimension of variety of sums of powers}\label{sec: dim vsp}

For a homogeneous form $F \in T$ of degree $d$, and $r > 0$,
let
\[
  \VSP^{\mathrm{aff},\circ}(F,r) = \left\{ \hat{Q} = \{\fromto{\ell_1}{\ell_r}\} \, \mid \, \text{$\hat{Q} \in \Hilb_r(T_1)$ is reduced and $F = \ell_1^d + \dotsb + \ell_r^d$} \right\}
\]
and
\[
  \VSPaff(F,r) = \overline{\VSP^{\mathrm{aff},\circ}(F,r)} \subset \Hilb_r(T_1) .
\]
Also, let
\begin{multline*}
  \VSP^{\circ}(F,r) = \left\{ Q = \{\fromto{[\ell_1]}{[\ell_r]}\} \, \right.\mid \\ 
         \left. \text{$Q \in \Hilb_r(\PP(T_1))$ is reduced and $F = \ell_1^d + \dotsb + \ell_r^d$} \right\}
\end{multline*}
and
\[
  \VSP(F,r) = \overline{\VSP^{\circ}(F,r)} \subset \Hilb_r(\PP(T_1)) .
\]
When $r = R(F)$ we omit it from the notation.
$ \VSP(F,r)$ is called the \emph{variety of sums of powers}, see \cite{MR1780430}.
$\VSP^{\mathrm{aff},\circ}(F)$ parametrizes Waring decompositions of $F$.
We make the general remark that $\dim \VSPaff(F) = \dim \VSP(F)$.
First, observe the following easy lemma.
\begin{lem}\label{lem:unique_solution_to_linear_algebra}
   Suppose $F = {\ell_1}^d + \dotsb + {\ell_r}^d$ is a Waring decomposition of a homogeneous polynomial $F$
     (i.e., $r= R(F)$).
   If $F = c_1{\ell_1}^d + \dotsb + c_r{\ell_r}^d$ for some $c_i \in \C$, then
     $c_1 =\dotsb = c_r =1$.
\end{lem}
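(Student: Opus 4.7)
The plan is to argue by contradiction: assume some $c_i \neq 1$ and produce a shorter sum-of-powers expression for $F$, contradicting $r = R(F)$.

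First, subtracting the two hypothesized expressions gives the relation
\[
  \sum_{i=1}^{r} (1 - c_i)\,\ell_i^d = 0.
\]
If some $c_j \neq 1$, the coefficient of $\ell_j^d$ in this relation is nonzero, so I can solve for $\ell_j^d$ as a linear combination of the remaining powers:
\[
  \ell_j^d = \sum_{i \neq j} a_i\, \ell_i^d
  \quad\text{with}\quad a_i = \frac{c_i - 1}{1 - c_j}.
\]

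Next, I substitute this expression back into $F = \ell_1^d + \dotsb + \ell_r^d$. This eliminates the $j$-th summand and rewrites
\[
  F = \sum_{i \neq j} (1 + a_i)\, \ell_i^d.
\]
Working over $\C$, each nonzero scalar multiple $(1+a_i)\ell_i^d$ can be absorbed by choosing a $d$-th root $\mu_i$ of $1+a_i$, so that $(1+a_i)\ell_i^d = (\mu_i \ell_i)^d$; any vanishing terms simply drop out. Hence $F$ is expressed as a sum of at most $r-1$ pure $d$-th powers of linear forms, contradicting $r = R(F)$. Therefore every $c_i = 1$.

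The only minor point to handle carefully is that some of the coefficients $1 + a_i$ could vanish, but this only strengthens the contradiction, since it means $F$ is actually expressible as a sum of strictly fewer than $r$ powers. There is no real obstacle beyond this bookkeeping; the statement is essentially a direct consequence of the minimality of $r$ and the fact that $\C$ has $d$-th roots.
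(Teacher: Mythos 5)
Your proof is correct and is essentially the same argument as the paper's: both take the difference of the two expressions, use the nonzero coefficient at some $\ell_j^d$ to rewrite $F$ (or a nonzero scalar multiple of it) as a combination of the remaining $r-1$ powers $\ell_i^d$ ($i \neq j$), and conclude by minimality of $r$ after absorbing scalars via $d$-th roots over $\C$.
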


\begin{proof}
   Suppose on the contrary that for instance $c_r \ne 1$.
   Then combining the two decompositions we obtain:
   \begin{align*}
     (c_r -1) F  & = c_r({\ell_1}^d + \dotsb + {\ell_r}^d)  - (c_1{\ell_1}^d + \dotsb + c_r{\ell_r}^d)\\
                 & = (c_r-c_1){\ell_1}^d + \dotsb + (c_r-c_{r-1}){\ell_{r-1}}^d.
   \end{align*}
   This contradicts the minimality of $r$.
\end{proof}

By this lemma, the obvious projectivization map $\VSP^{\mathrm{aff},\circ}(F) \to \VSP^{\circ}(F)$ is finite of degree $d^r$ ($r = R(F)$).
Indeed, each $\ell_i$ can only be replaced by a scalar multiple $\lambda \ell_i$ when $\lambda$ is a $d$-th root of unity.

We now turn to monomials.
As before, let $F = x_0^{d_0} \dotsm x_n^{d_n}$, $0 < d_0 \leq \dotsb \leq d_n$, $d = d_0 + \dotsb + d_n$, and $r = R(F) = (d_1+1)\dotsm(d_n+1)$.
Also, let $\ccJ = (\fromto{\alpha_1^{d_1+1}}{\alpha_n^{d_n+1}})$.
We get a formula for the dimension of the space of solutions to the Waring decomposition problem.

\begin{prop}\label{prop:dimension_of_VSP}
   Suppose $F$ is a monomial as above.
   Let $h$ be the Hilbert function of $S/\ccJ$.
   Then $\VSP(F)$ is irreducible and $\dim \VSP(F) = h(d_1-d_0) + h(d_2-d_0) + \dotsb + h(d_n-d_0)$.
\end{prop}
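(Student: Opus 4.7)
The plan is to realize $\VSP^\circ(F)$ as the image of a natural surjective morphism from an open dense subset $\ccB^\circ$ of the affine space $\ccB = \prod_{i=1}^n S_{d_i-d_0}$, and compute $\dim \VSP(F)$ via fibers. For any $\bar\phi \in \ccB$, the ideal $\ccI(n,\bar\phi)$ defined in the previous section is a complete intersection of multidegree $(d_1+1, \dotsc, d_n+1)$, hence defines a zero-dimensional subscheme of $\PP^n$ of length $r = (d_1+1)\dotsm(d_n+1) = R(F)$; moreover its generators all lie in $F^\perp$. On the dense open subset $\ccB^\circ \subset \ccB$ where $\ccI(n,\bar\phi)$ is radical (nonempty by Proposition~\ref{prop:general_phi_bar_gives_radical}), apolarity together with the existence of $d$-th roots in $\C$ produces a Waring decomposition of $F$ supported on the $r$ points cut out by $\ccI(n,\bar\phi)$, yielding a morphism $\Phi\colon \ccB^\circ \to \VSP^\circ(F)$. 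Proposition~\ref{prop:any_solution_is_ccI_n_bar_phi} then says $\Phi$ is surjective, and since $\ccB^\circ$ is irreducible, so are $\VSP^\circ(F)$ and its closure $\VSP(F)$.

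The main step is computing the fibers of $\Phi$. Fix $\bar\phi \in \ccB^\circ$, set $\ccI := \ccI(n,\bar\phi)$, and suppose $\ccI(n,\bar\phi') = \ccI$ for some other $\bar\phi' \in \ccB$. Subtracting the $i$-th generator of $\ccI(n,\bar\phi')$ from that of $\ccI(n,\bar\phi)$ gives $\alpha_0^{d_0+1}(\phi_i-\phi_i') \in \ccI$; by Lemma~\ref{lem: I colon alpha_0 equals I} this forces $\phi_i - \phi_i' \in \ccI_{d_i-d_0}$. Conversely, any $\bar\phi'$ satisfying $\phi_i' \equiv \phi_i \pmod{\ccI_{d_i-d_0}}$ for all $i$ yields $\ccI(n,\bar\phi') \subseteq \ccI$, and equality follows since both ideals are complete intersections of the same multidegree and hence have identical Hilbert functions. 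Thus $\Phi^{-1}(\Phi(\bar\phi))$ is an affine subspace of $\ccB$ of dimension $\sum_{i=1}^n \dim \ccI_{d_i-d_0}$. By Proposition~\ref{prop: Hilbert function agrees} this equals $\sum_{i=1}^n \dim \ccJ_{d_i-d_0}$, which is independent of the choice of $\bar\phi \in \ccB^\circ$, so every fiber has the same dimension.

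Putting this together,
\[
  \dim \VSP(F) = \dim \ccB - \sum_{i=1}^n \dim \ccJ_{d_i-d_0} = \sum_{i=1}^n \bigl(\dim S_{d_i-d_0} - \dim \ccJ_{d_i-d_0}\bigr) = \sum_{i=1}^n h(d_i - d_0),
\]
as required. The crucial ingredient is the fiber description: it depends on Lemma~\ref{lem: I colon alpha_0 equals I} to pull the degree-$(d_i+1)$ relation $\alpha_0^{d_0+1}(\phi_i-\phi_i') \in \ccI$ down to the degree-$(d_i-d_0)$ relation $\phi_i - \phi_i' \in \ccI_{d_i-d_0}$, which in turn uses the radicality of $\ccI$.
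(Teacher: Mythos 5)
Your proof is correct, and it takes a genuinely different route from the paper's. The paper restricts to the subspace $\ccB' \subset \ccB$ of tuples $\bar\phi$ whose $\phi_i$ have no term in $\ccJ$, proves (Proposition~\ref{prop: unique generators}) that the assignment $\bar\phi \mapsto \ccI(n,\bar\phi)$ is a \emph{bijection} on $\ccB'$, and reads off $\dim \VSP(F) = \dim \ccB' = \sum h(d_i-d_0)$ directly. You instead work with the full affine space $\ccB = \prod S_{d_i-d_0}$, observe that $\Phi$ is surjective by Proposition~\ref{prop:any_solution_is_ccI_n_bar_phi}, and compute the fiber: $\ccI(n,\bar\phi') = \ccI(n,\bar\phi)$ iff $\phi_i' - \phi_i \in \ccI_{d_i-d_0}$ for each $i$, the forward direction via Lemma~\ref{lem: I colon alpha_0 equals I} and the converse via the complete-intersection Hilbert function argument. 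Then $\dim \VSP(F) = \dim\ccB - \sum\dim\ccI_{d_i-d_0} = \sum(\dim S_{d_i-d_0}-\dim\ccJ_{d_i-d_0}) = \sum h(d_i-d_0)$, using Proposition~\ref{prop: Hilbert function agrees} to replace $\ccI$ by $\ccJ$. Your fibration argument sidesteps the slightly delicate infinite-descent argument used in the paper's Proposition~\ref{prop: unique generators} (showing an element of $\ccI$ with no term in $\ccJ$ is divisible by arbitrarily high powers of $\alpha_0^{d_0+1}$, hence zero), at the cost of invoking the fiber dimension theorem for morphisms; the paper's normalization has the compensating benefit of exhibiting an explicit birational parametrization of $\VSP(F)$ by an affine space of the correct dimension. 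One minor point worth flagging: you implicitly use that $\ccI(n,\bar\phi')$ is always a complete intersection ideal of the stated multidegree (so that $\ccI(n,\bar\phi')\subseteq\ccI$ with equal Hilbert functions forces equality); this holds because the projective zero locus avoids the hyperplane $\{\alpha_0=0\}$ and hence is zero-dimensional, a fact the paper also asserts without proof.
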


To prove this, we first describe the space parametrizing the radical one-dimensional ideals $\ccI \subset F^\perp$,
corresponding to points in $\VSP^{\circ}(F)$.

\begin{prop}\label{prop: unique generators}
Any ideal $\ccI$ as in Proposition~\ref{prop: Hilbert function agrees} is of the form $\ccI(n,\bar\phi)$
for a unique $\bar\phi = (\fromto{\phi_1}{\phi_n})$ such that no term of any $\phi_i$ lies in $\ccJ$.
\end{prop}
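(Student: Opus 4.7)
The plan is to reduce the claim to a direct-sum decomposition $S_t = \ccI_t \oplus M_t$, where $M_t \subset S_t$ denotes the span of monomials $\alpha_0^{b_0}\alpha_1^{b_1}\dotsm\alpha_n^{b_n}$ with $b_j \leq d_j$ for $j = 1, \dotsc, n$ --- i.e., the span of standard monomials of $\ccJ$ in degree $t$. By Proposition~\ref{prop:any_solution_is_ccI_n_bar_phi} we already have $\ccI = \ccI(n, \bar\phi^{(0)})$ for some $\bar\phi^{(0)}$. If $\bar\phi$ is another tuple with $\ccI(n, \bar\phi) = \ccI$, then for each $i$ the element $\alpha_0^{d_0+1}(\phi_i - \phi_i^{(0)})$ lies in $\ccI$, and iterating Lemma~\ref{lem: I colon alpha_0 equals I} gives $\ccI : \alpha_0^{d_0+1} = \ccI$, so $\phi_i - \phi_i^{(0)} \in \ccI_{d_i - d_0}$. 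Thus the admissible $\phi_i$'s form exactly one coset of $\ccI_{d_i - d_0}$ in $S_{d_i - d_0}$; and because $\ccJ$ is a monomial ideal, the condition ``no term of $\phi_i$ lies in $\ccJ$'' is exactly the condition $\phi_i \in M_{d_i - d_0}$.

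Since $\ccJ$ is a monomial ideal we have $S_t = \ccJ_t \oplus M_t$, and Proposition~\ref{prop: Hilbert function agrees} gives $\dim \ccI_t = \dim \ccJ_t$, hence $\dim \ccI_t + \dim M_t = \dim S_t$. So the decomposition $S_t = \ccI_t \oplus M_t$ is equivalent to either of the statements $\ccI_t + M_t = S_t$ or $\ccI_t \cap M_t = 0$, and once it is in hand, every coset of $\ccI_{d_i - d_0}$ meets $M_{d_i - d_0}$ in a single point, giving both existence and uniqueness of the normalized $\bar\phi$.

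To establish $S_t \subseteq \ccI_t + M_t$, I work with the weight $w(\alpha_0^{b_0}\alpha_1^{b_1}\dotsm\alpha_n^{b_n}) = b_1 + \dotsb + b_n$. The key inequality is $w(\alpha_0^{d_0+1}\phi_j) \leq d_j - d_0 < d_j + 1 = w(\alpha_j^{d_j+1})$, which holds because $\phi_j$ has degree $d_j - d_0$ and $\alpha_0^{d_0+1}$ contributes zero to $w$. Given $f \in S_t$, I pick a term $c\cdot m \cdot \alpha_j^{d_j+1}$ (with $j \geq 1$) of $f$ of maximum weight among terms divisible by some $\alpha_k^{d_k+1}$, and subtract $cm(\alpha_j^{d_j+1} - \alpha_0^{d_0+1}\phi_j) \in \ccI_t$ from $f$. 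This kills the chosen term and adds in its place monomials of $cm\alpha_0^{d_0+1}\phi_j$, all of weight strictly less than $w(cm\alpha_j^{d_j+1})$. The main step requiring care is termination: the newly introduced monomials may themselves be divisible by various $\alpha_k^{d_k+1}$, but the lexicographic pair (maximum weight of a bad term, number of bad terms at that weight) strictly decreases at each step, so after finitely many reductions the result lies in $M_t$ and differs from $f$ by an element of $\ccI_t$, completing the proof.
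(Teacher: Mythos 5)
Your proof is correct and takes a genuinely different route from the paper's, though the two share the weight-based reduction at their core. Both approaches establish $\ccI_t + M_t = S_t$ by dividing against the generators $\alpha_j^{d_j+1}-\alpha_0^{d_0+1}\phi_j$ using the fact that replacing $\alpha_j^{d_j+1}$ by $\alpha_0^{d_0+1}\phi_j$ strictly lowers the weight $w$ (the paper phrases this only as eliminating $\ccJ$-terms from each $\phi_i$ and treats termination as obvious; you spell the termination out for an arbitrary $f\in S_t$). The genuine divergence is the uniqueness step: you deduce $\ccI_t\cap M_t=0$ from $\ccI_t+M_t=S_t$ together with the dimension count $\dim\ccI_t=\dim\ccJ_t=\dim S_t-\dim M_t$ supplied by Proposition~\ref{prop: Hilbert function agrees}, whereas the paper proves $\ccI_t\cap M_t=0$ with no dimension count at all, by showing that an element of $\ccI$ with no term in $\ccJ$ must be divisible by $\alpha_0^{d_0+1}$ and then iterating with $\ccI:\alpha_0=\ccI$ (Lemma~\ref{lem: I colon alpha_0 equals I}). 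Your version packages the result more cleanly as a direct sum $S_t=\ccI_t\oplus M_t$, at the price of leaning harder on the Hilbert-function computation; the paper's uniqueness argument is self-contained. One small point to tighten: from the direct sum you obtain a unique $\phi_i^*\in(\phi_i^{(0)}+\ccI_{d_i-d_0})\cap M_{d_i-d_0}$, which gives $\alpha_i^{d_i+1}-\phi_i^*\alpha_0^{d_0+1}\in\ccI$ and hence $\ccI(n,\bar\phi^*)\subseteq\ccI$, but the reverse inclusion --- implicit in your phrase ``exactly one coset'' --- still needs a sentence. It holds because $\ccI(n,\bar\phi^*)$ is a complete intersection generated in degrees $d_1+1,\dotsc,d_n+1$ (as the paper notes when introducing $\ccI(k,\bar\phi)$), hence has the same Hilbert function as $\ccI$.
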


\begin{proof}
Existence of such a $\bar\phi$ is obvious: if a term of $\phi_i$ is divisible by some $\alpha_j^{d_j+1}$, it can be eliminated by subtracting an appropriate multiple of
the generator $\alpha_j^{d_j+1} - \phi_j \alpha_0^{d_0+1}$ of $\ccI$.
Suppose $\phi_i$ can be replaced by $\phi'_i$, leaving $\ccI$ the same, with no term of either $\phi_i$ or $\phi'_i$ lying in $\ccJ$.
Then the generators $\alpha_i^{d_i+1} - \phi_i \alpha_0^{d_0+1}$ and $\alpha_i^{d_i+1} - \phi'_i \alpha_0^{d_0+1}$ must differ by a combination of previous generators,
\begin{equation}\label{eq: changing generator}
  (\phi_i - \phi'_i) \alpha_0^{d_0+1} = \sum_{j=1}^{i-1} \psi_j (\alpha_j^{d_j+1} - \phi_j \alpha_0^{d_0+1}) ,
\end{equation}
%for some $\psi_j$, homogeneous of degree $\deg \psi_j = d_i - d_j$.
%For each $j$, write 
%\[
%  \psi_j = \sum_{k \geq 0} \psi_{j,k} (\alpha_0^{d_0+1})^k,
%  \qquad
%  \phi_j = \sum_{k \geq 0} \phi_{j,k} (\alpha_0^{d_0+1})^k,
%\]
%where no term of any $\psi_{j,k}$ or $\phi_{j,k}$ is divisible by $\alpha_0^{d_0+1}$.
%Similarly write $\phi_i = \sum \phi_{i,k} (\alpha_0^{d_0+1})^k$, $\phi'_i = \sum \phi'_{i,k} (\alpha_0^{d_0+1})^k$,
%$\alpha_0^{d_0+1}$ not dividing any term of any $\phi_{i,k}$ or $\phi'_{i,k}$.
%Then \eqref{eq: changing generator} becomes
%\[
%  \sum_{k \geq 0} (\phi_{i,k} - \phi'_{i,k}) (\alpha_0^{d_0+1})^{k+1}
%  =
%  \sum_{k \geq 0} (\alpha_0^{d_0+1})^k \sum_{j=1}^{i-1} \left[ \psi_{j,k} \alpha_j^{d_j+1} - \sum_{s+t = k-1} \psi_{j,s} \phi_{j,t} \right] ,
%\]
%so for each $k \geq 0$,
%\begin{equation}\label{eq: relation piece}
%  \phi_{i,k-1} - \phi'_{i,k-1} = \sum_{j=1}^{i-1} \left[ \psi_{j,k} \alpha_j^{d_j+1} - \sum_{s+t = k-1} \psi_{j,s} \phi_{j,t} \right] .
%\end{equation}
%For $k=0$, the left hand side and the inner sum are zero, so $\sum \psi_{j,0} \alpha_j^{d_j+1} = 0$,
%implying each $\psi_{j,0} \in (\fromto{\alpha_1^{d_1+1}}{\alpha_{i-1}^{d_{i-1}+1}})$.
%For $k=1$, then, $\phi_{i,0} - \phi'_{i,0} \in (\fromto{\alpha_1^{d_1+1}}{\alpha_{i-1}^{d_{i-1}+1}})$.
%But there can be no terms from this ideal, so $\phi_{i,0} - \phi'_{i,0} = 0$.
%In turn, $\sum \psi_{j,1} \alpha_j^{d_j+1} = \sum \psi_{j,0} \phi_{j,0}$.
for some $\psi_j$.
The right hand side is a combination of generators of $\ccI$ with no term in $\ccJ$.
From the above equation it is clearly divisible by $\alpha_0^{d_0+1}$.
We claim more generally that any element of $\ccI$ with no term in $\ccJ$
is necessarily divisible by $\alpha_0^{d_0+1}$.
Indeed, in a combination as in \eqref{eq: changing generator}, each term of each product $\psi_j \alpha_j^{d_j+1}$ must cancel;
the only surviving terms arise from the products $\psi_j \phi_j \alpha_0^{d_0+1}$.

Now, suppose $\beta \in \ccI$ has no term in $\ccJ$, so $\beta$ is divisible by $\alpha_0^{d_0+1}$.
Then $\gamma = \beta / \alpha_0^{d_0+1}$ lies in $\ccI : \alpha_0^{d_0+1}$, which is $\ccI$, by Lemma~\ref{lem: I colon alpha_0 equals I}.
And it is still true that no term of $\gamma$ is in $\ccJ$, so again $\gamma$ is divisible by $\alpha_0^{d_0+1}$.
Continuing in this way, $\beta$ is divisible by $(\alpha_0^{d_0+1})^k$ for every $k \geq 0$.
Hence $\beta = 0$.

Applying this to \eqref{eq: changing generator}, $\phi_i - \phi'_i = 0$, as desired.
\end{proof}

We have the following converse.

\begin{prop}\label{prop: general restricted bar phi give radical}
Let $\bar\phi = (\fromto{\phi_1}{\phi_n})$ be a general tuple of homogeneous polynomials of degree $\deg \phi_i = d_i - d_0$
with no terms in $\ccJ$.
Then $\ccI(n,\bar\phi)$ is radical.
\end{prop}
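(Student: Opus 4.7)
The proof mirrors Proposition~\ref{prop:general_phi_bar_gives_radical}, but works on the restricted parameter space of $\bar\phi$ with no terms in $\ccJ$.

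Let $V_i \subset S_{d_i-d_0}$ denote the linear subspace spanned by monomials not lying in $\ccJ$; equivalently, $V_i$ is a natural linear complement of $\ccJ_{d_i-d_0}$ in $S_{d_i-d_0}$, so $V_i \cong (S/\ccJ)_{d_i-d_0}$. The product $\ccB' := \prod_{i=1}^n V_i$ is an affine space (of dimension $\sum_{i=1}^n h(d_i-d_0)$, matching Proposition~\ref{prop:dimension_of_VSP}) parametrizing exactly the tuples $\bar\phi = (\phi_1,\dotsc,\phi_n)$ with no term of any $\phi_i$ lying in $\ccJ$.

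Inside $\ccB' \times \PP^n$, consider the universal subscheme $\ccQ'$ cut out by the $n$ bihomogeneous equations $\alpha_i^{d_i+1} - \alpha_0^{d_0+1}\phi_i = 0$, where the $\phi_i$ are regarded as linear functions of the coordinates on $V_i$. As in the proof of Proposition~\ref{prop:general_phi_bar_gives_radical}, each fiber $\ccQ'_{\bar\phi}$ is a complete intersection in $\PP^n$ of degrees $d_1+1, \dotsc, d_n+1$, hence zero-dimensional and Cohen-Macaulay. Since all fibers are equidimensional of the expected dimension, the projection $\ccQ' \to \ccB'$ is equidimensional, and $\ccQ'$ itself is Cohen-Macaulay \cite[Prop.~18.13]{eisenbud}, so the projection is flat \cite[Thm~18.16]{eisenbud}.

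The explicit Waring decomposition from Section~\ref{sec: explicit expression} corresponds to the ideal $\ccI(n,\bar\phi_0)$ with $\bar\phi_0 = (\alpha_0^{d_1-d_0}, \dotsc, \alpha_0^{d_n-d_0})$. Since no monomial $\alpha_0^k$ lies in $\ccJ = (\alpha_1^{d_1+1},\dotsc,\alpha_n^{d_n+1})$, we have $\bar\phi_0 \in \ccB'$, and the fiber $\ccQ'_{\bar\phi_0}$ is reduced — it is the reduced scheme of the $(d_1+1)\dotsm(d_n+1)$ points corresponding to the explicit decomposition. Since being reduced is an open property in a flat family, a general $\bar\phi \in \ccB'$ has reduced fiber, i.e., $\ccI(n,\bar\phi)$ is radical.

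The only nontrivial step is exhibiting one radical fiber over $\ccB'$, and this is handed to us by the explicit expression of Section~\ref{sec: explicit expression}; the rest is just restricting the flat-family argument of Proposition~\ref{prop:general_phi_bar_gives_radical} to the linear subspace $\ccB' \subset \ccB$.
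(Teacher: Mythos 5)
Your proof is correct and follows the same approach as the paper: restrict the flat-family argument of Proposition~\ref{prop:general_phi_bar_gives_radical} to the subspace $\ccB'$ of tuples with no terms in $\ccJ$, and supply a reduced fiber via the explicit decomposition of Section~\ref{sec: explicit expression}. You also helpfully make explicit the choice $\bar\phi_0 = (\alpha_0^{d_1-d_0},\dotsc,\alpha_0^{d_n-d_0})$ and observe that these powers of $\alpha_0$ lie outside $\ccJ$, a point the paper leaves implicit.
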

\begin{proof}
For each $i$, let $B'_i \subset S_{d_i - d_0}$ be the set of monomials of degree $d_i - d_0$ which are not in $\ccJ$,
so $B'_i$ gives a basis for $(S/\ccJ)_{d_i - d_0}$.
For each $i$, let $\langle B'_i \rangle \subset S_{d_i - d_0}$ be the linear span of $B'_i$.
The hypothesis means each $\phi_i$ is general in $\langle B'_i \rangle$.

Let $\ccB' = \prod_{i=1}^n \langle B'_i \rangle$.
Let $\ccQ' \subset \ccB' \times \PP^n$ be defined by
$\ccI(n,\bar\phi) = (\alpha_i^{d_i+1} - \phi_i \alpha_0^{d_0+1} \mid i \in \{1,\dotsc,n\})$ where
$\bar\phi \in \ccB'$ and $(\fromto{\alpha_0}{\alpha_n})$ are coordinates on $\PP^n$.
The rest of the proof is the same as the proof of Proposition~\ref{prop:general_phi_bar_gives_radical}.
For the statement that ``some of the fibers are reduced'', a reduced fiber is provided by the explicit expression
given in Section~\ref{sec: explicit expression}.
\end{proof}

Now the proof of Proposition~\ref{prop:dimension_of_VSP} is immediate.
\begin{proof}[Proof of Proposition~\ref{prop:dimension_of_VSP}]
The map $\ccB' \dashrightarrow \VSP^{\circ}(F)$, given by $\bar\phi \mapsto \ccI(n,\bar\phi)$
(the defining ideal of the fiber $\ccQ'_{\bar\phi} \subset \PP^n$) whenever this is radical,
is defined on an open subset of $\ccB'$ by Proposition~\ref{prop: general restricted bar phi give radical},
and (on this open subset) is one-to-one and onto by Proposition~\ref{prop: unique generators}.
Since $\ccB'$ is irreducible, so is $\VSP(F)$, and
\[
  \dim \VSP(F) = \dim \VSP^{\circ}(F) = \dim \ccB' = \sum \dim \langle B'_i \rangle = \sum h(d_i-d_0)
\]
as claimed.
\end{proof}

\begin{proof}[Proof of Corollary \ref{cor: dim of VSP for equal exponents}]
Since $d_i \geq d_0$, each $h(d_i-d_0) \geq 1$.
Since $\ccJ$ has no generators in degree $1$, $h(1) = \dim S_1 \geq 2$, and $h(t) \geq h(1)$ for $t \geq 1$,
so $h(d_i-d_0) = 1$ if and only if $d_i - d_0 = 0$.
\end{proof}

%\section{Examples}

\begin{example}
For $F = x^2 y^2 z^2$,
$\ccI$ must have the form $\ccI = (\beta^3 - a \alpha^3, \gamma^3 - b \alpha^3)$ for some constants $a,b$,
so $\dim \VSP(F) = 2$, and this is equal to $h(d_1-d_0) + h(d_2-d_0) = 2 h(0)$.

This ideal is a radical complete intersection if and only if $ab \neq 0$.
In that case, rescaling the variables takes $\ccI$ to $(\beta^3-\alpha^3,\gamma^3-\alpha^3)$.
\end{example}

Finally, we consider the natural group operation on $\VSP(F)$ mentioned in the introduction.
Let $(\C^*)^{n+1}$ act on $S = \C[x_0,\dotsc,x_n]$ by scaling the variables.
The $n$-dimensional subtorus $\ccT = \{ (\fromto{\lambda_0}{\lambda_n}) \, \mid \, \prod \lambda_i^{d_i} = 1 \}$
leaves $F = x_0^{d_0} \dotsm x_n^{d_n}$ fixed, and so acts on $\VSP(F)$.
Also, let $\ccS_F$ be the group of permutations of $\{0,\dotsc,n\}$ respecting the degree tuple $(d_0,\dotsc,d_n)$,
in the sense that a permutation $\pi$ lies in $\ccS_F$ if and only if, for each $i$, $d_{\pi(i)} = d_i$.
When $\ccS_F$ acts on $S$ by permuting variables it leaves $F$ fixed, so again acts on $\VSP(F)$.
The actions of $\ccT$ and $\ccS_F$ commute, so $\VSP(F)$ carries an action of the $n$-dimensional algebraic group $\ccT \times \ccS_F$.

Theorem~\ref{thm_decomposition_uniqueness} states that
this action is transitive (in fact the $\ccT$-action alone is already transitive) if and only if $F = (x_0\dotsm x_n)^k$.
\begin{proof}[Proof of Theorem~\ref{thm_decomposition_uniqueness}]
Let $F = x_0^{d_0} \dotsm x_n^{d_n}$ with $d_0 \leq \dotsb \leq d_n$.
If $d_0 < d_n$ then $\dim \VSP^{\circ}(F) > n = \dim \ccT = \dim (\ccS_F \times \ccT)$, so the action can not be transitive.
If $d_0 = \dotsb = d_n = k$ then by Theorem~\ref{thm_decomposition_is_a_complete_intersection}
every Waring decomposition of $F$ is cut out by an ideal of the form
$(\fromto{\alpha_1^{k+1} - \phi_1 \alpha_0^{k+1}}{\alpha_n^{k+1} - \phi_n \alpha_0^{k+1}})$,
where the $\phi_i$ are scalars.
Since the ideal of a Waring decomposition is a radical ideal, the $\phi_i$ are in fact nonzero scalars.
Then scaling each $x_i$ by $\phi_i^{-1/k}$ takes this Waring decomposition to the one cut out by
$(\fromto{\alpha_1^{k+1} - \alpha_0^{k+1}}{\alpha_n^{k+1} - \alpha_0^{k+1}})$,
showing that $\ccT$ acts transitively on $\VSP^{\circ}(F)$.
\end{proof}

\providecommand{\bysame}{\leavevmode\hbox to3em{\hrulefill}\thinspace}
\providecommand{\MR}{\relax\ifhmode\unskip\space\fi MR }
% \MRhref is called by the amsart/book/proc definition of \MR.
\providecommand{\MRhref}[2]{%
  \href{http://www.ams.org/mathscinet-getitem?mr=#1}{#2}
}
\providecommand{\href}[2]{#2}

\end{document}